\newtheorem{myrem}{Remark}[section]
\newtheorem{mytheo}{Theorem}[section]
\newtheorem{mylem}{Lemma}[section]
\newtheorem{mycor}{Corollary}[section]
\theoremstyle{remark}
\numberwithin{equation}{section}
\begin{document}

\title{Deviation Inequalities for the Spectral Norm of Structured Random Matrices}


\author{Guozheng Dai}
\address{Zhejiang University, Hangzhou, 310027,  China.}
\email{11935022@zju.edu.cn}

\author{Zhonggen Su}
\address{Zhejiang University, Hangzhou, 310027,  China.}
\email{suzhonggen@zju.edu.cn}

\date{}

\keywords{comparison of weak and strong moments, contraction principle, deviation inequality, spectral norm
}

\begin{abstract}
We study the deviation inequality for the spectral norm of structured random matrices with non-gaussian entries. In particular, we establish an optimal bound for the $p$-th moment of the spectral norm by transfering the spectral norm into the suprema of canonical processes. A crucial ingredient of our proof is a comparison of weak and strong moments. As an application, we show a deviation inequality for the smallest singular value of a rectangular random matrix.
\end{abstract}

\maketitle

\section{Introduction and Main Result}
Random matrix theory has been an important area of probability theory. There has been much work to investigate precise analytic results and limit theorems of random matrices with exact or approximate symmetrices \cite{Anderson_book_matrices, Tao_book_matrices}. More recently, there has also been considerable interest in structured random matrices whose entries are no longer identically distributed. In particular, the structured random matrix $X$ is an $n\times n$ symmetric random matrix with entries $X_{ij}=b_{ij}\xi_{ij}$, where $\{\xi_{ij}, 1\le j\le i\le n \}$ are independent standard random variables and $\{b_{ij}, 1\le j\le i\le n  \}$ are given nonnegative scalars.
This model includes many interesting cases, such as the Wigner matrix, which is the case $b_{ij}=1$ for all $1\le i, j\le n$, random band matrices where $b_{ij}=\mathbb{I}_{\vert i-j\vert\le k}$ with a constant $k\ge 0$, and so on.

Understanding the behavior of the spectral norm of random matrices is a fundamental question in random matrix theory and one of considerable importance in many modern applications \cite{Tropp_book}.
In this paper, we pay attention to the structured random matrix's spectral norm $\Vert X\Vert$, and particularly focus on deviation inequalities of $\Vert X\Vert$. For the sake of providing some background, we shall give a very simple review of the relevant results.

Let $X=(b_{ij}\xi_{ij})$ as above and assume further $\xi_{ij}\sim N(0, 1)$. As a consequence of the noncommutative Khintchine inequality \cite{Pisier_book}, we have the following useful nonasymptotic bound on $\Vert X\Vert$ in our setting
\begin{align}
	\textsf{E}\Vert X\Vert \lesssim \sigma \sqrt{\log n}, \quad \text{with}\,\, \sigma:=\max_{i}\sqrt{\sum_{j}b_{ij}^{2}},\nonumber
\end{align}
where we write $a\lesssim b$ if $a\le Cb$ for a universal constant $C$.
Unfortunately, this bound is not optimal even in the most common case of Wigner matrices ($b_{ij}=1$ for all $i, j$). Indeed, $\sigma=\sqrt{n}$ in the Wigner case. Hence, the resulting bound $\textsf{E}\Vert X\Vert\lesssim \sqrt{n\log n}$ falls short of the correct scaling $\textsf{E}\Vert X\Vert\sim \sqrt{n}$.

Another bound on $\Vert X\Vert$ can be deduced from a simple $\varepsilon$-net argument; see Theorem 4.4.5 in \cite{Vershynin_book_highprobability}. This yields the following inequality:
\begin{align}
	\textsf{E}\Vert X\Vert \lesssim \sigma_{*} \sqrt{ n}, \quad \text{with}\,\, \sigma_{*}:=\max_{i}\vert b_{ij}\vert.\nonumber
\end{align}
Although this bound is sharp when $X$ is a Wigner matrix, it becomes terrible in the diagonal case where $b_{ij}=\mathbb{I}_{\vert i-j\vert= 0}$, a special case of random band matrices. In particular, $\sigma_{*}=1$ and $\textsf{E}\Vert X\Vert=\textsf{E}\max_{i}\vert \xi_{ii}\vert\sim\sqrt{\log n}$ in this case.

By comparing the spectral norm $\Vert X\Vert$ with the quantity $(\text{Tr}(X^{p}))^{1/p}$ for $p\sim\log n$, where $\text{Tr}(A)$ is the trace of matrix $A$, Bandeira and van Handel \cite{Bandeira_aop} showed the following better bound: 
\begin{align}\label{Eq_bound_Banderia}
	\textsf{E}\Vert X\Vert\lesssim \sigma+\sigma_{*}\sqrt{\log n},
\end{align}
where $\sigma$ and $\sigma_{*}$ are defined as above. One can easily verify that this result is optimal in the Wigner and diagonal cases mentioned above. Here, we say a bound for $\textsf{E}\Vert X\Vert$ is optimal if it captures $\textsf{E}\Vert X\Vert$ correctly up to universal constants.
The bound in \eqref{Eq_bound_Banderia} has been proven to be optimal in many settings (see Corollary 3.15 in \cite{Bandeira_aop}), except for some sparse cases,  such as the case $b_{ij}=0$. Note that in this case,
\begin{align}
	\textsf{E}\Vert X\Vert=\textsf{E} \max_{i}b_{ii}\vert\xi_{ii}\vert\asymp \max_{i}b_{ii}^{*}\sqrt{\log (i+1)}\le \max_{i}b_{ii}\sqrt{\log n},\nonumber
\end{align}
where $\{b_{ii}^{*} \}$ is a descending arrangement of $\{b_{ii} \}$. One can refer to Remark 4.6 in \cite{vanhandel_structure} for a detailed proof.

Only recently, did Latała, van Handel and Youssef \cite{Latala_Inventions} prove an optimal bound on $\Vert X\Vert$ for all cases,
\begin{align}
	\textsf{E}\Vert X\Vert\asymp \max_{i}\sqrt{\sum_{j}b_{ij}^{2}}+\max_{ij}b_{ij}^{*}\sqrt{\log i},\nonumber
\end{align}
where we write $a\asymp b$ if $a\lesssim b$ and $b\lesssim a$. Here the matrix $\{b_{ij}^{*}\}$ is obtained by permuting the rows and columns of the matrix $\{b_{ij}\}$ such that
\begin{align}
	\max_{j}b_{1j}^{*}\ge \max_{j}b_{2j}^{*}\ge \cdots\ge \max_{j}b_{nj}^{*}.\nonumber
\end{align}
They also extended their optimal result to the cases where $X_{ij}$ are non-gaussian. In particular, let $X=(b_{ij}\xi_{ij})$ be an $n\times n$ symmetric random matrix, where $\{b_{ij}, i\ge j \}$ are positive constants and  $\{\xi_{ij}, i\ge j \}$ are independent centered random variables such that
\begin{align}
	\theta_{1}p^{1/\alpha}\le (\textsf{E} \vert\xi_{ij}\vert^{p})^{1/p}\le \theta_{2}p^{1/\alpha} \quad \text{for all}\,\, p\ge 1,\nonumber
\end{align}
where $\alpha, \theta_{1}, \theta_{2}>0$. For convenience, we shall say such variables are $\alpha$-exponential with parameters $\theta_{1}, \theta_{2}$ in the sequel. Latała et al proved in the case $0<\alpha\le 2$
\begin{align}\label{Eq_optimal_bound}
	\textsf{E}\Vert X\Vert\asymp_{\theta_{1}, \theta_{2}, \alpha} \max_{i}\sqrt{\sum_{j}b_{ij}^{2}}+\max_{ij}b_{ij}^{*}(\log i)^{1/\alpha},
\end{align}
where $a\asymp_{\theta} b$ means $b/C(\theta)\le a\le C(\theta)b$ for a constant $C(\theta)$ only depending on $\theta$.

In this paper, we study the nonasymptotic properties of $\Vert X\Vert$ in the $\alpha$-exponential case, $\alpha>0$. Our main result shows an deviation inequality for $\Vert X\Vert$, reading as follows:
\begin{mytheo}\label{Theo_deviation}
	Assume $X=(b_{ij}\xi_{ij})$ is an $n\times n$ symmetric random matrix, where $\{b_{ij}, i\ge j \}$ are positive constants and  $\{\xi_{ij}, i\ge j \}$ are independent $\alpha$-exponential  random  variables with parameters $\theta_{1}, \theta_{2}$. Then, in the case $\alpha>0$, we have for $t\ge 0$
	\begin{align}
		\textsf{P}\big\{\Vert X\Vert\ge C_{1}\textsf{E}\Vert X\Vert +t\max_{ij}b_{ij} \big\}\le C_{2}\exp(-c_{1}t^{\alpha})\nonumber
	\end{align}
	and 
	\begin{align}
		\textsf{P}\big\{\Vert X\Vert\ge C_{3}\textsf{E}\Vert X\Vert +t\max_{ij}b_{ij} \big\}\ge C_{4}\exp(-c_{2}t^{\alpha}),\nonumber
	\end{align}
	where $c_{1}, c_{2}, C_{1}, \cdots, C_{4}$ are constants depending only on $\theta_{1}, \theta_{2}, \alpha$ and satisfying $C_{3}<C_{1}, C_{4}<C_{2}, c_{1}<c_{2}$.
\end{mytheo}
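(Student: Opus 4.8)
The plan is to prove two-sided moment estimates $(\textsf{E}\Vert X\Vert^{p})^{1/p}\asymp_{\theta_{1},\theta_{2},\alpha}\textsf{E}\Vert X\Vert+p^{1/\alpha}\max_{ij}b_{ij}$ for all $p\ge1$, and then to read off the two inequalities: the upper tail from Chebyshev's inequality and the lower tail from the Paley--Zygmund inequality. Since $X$ is symmetric, we begin by transferring the spectral norm into the suprema of a canonical process,
\begin{align}
\Vert X\Vert=\sup_{v\in B_{2}^{n}}\Big|\sum_{i\ge j}\xi_{ij}a_{ij}(v)\Big|=\sup_{t\in T}|\langle\xi,t\rangle|,\qquad a_{ij}(v)=(2-\mathbb{I}_{i=j})\,b_{ij}v_{i}v_{j},\nonumber
\end{align}
where $B_{2}^{n}$ is the Euclidean unit ball, $\xi=(\xi_{ij})_{i\ge j}$ has independent $\alpha$-exponential coordinates, and $T=\{a(v):v\in B_{2}^{n}\}$. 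The point is that $T$ is very small: since $\sum_{i,j}v_{i}^{2}v_{j}^{2}=\|v\|_{2}^{4}$, for every $t\in T$ the decreasing rearrangement of $(|t_{ij}|)$ obeys $t^{*}_{k}\le 2(\max_{ij}b_{ij})\,k^{-1/2}$; in particular $\sup_{t\in T}\|t\|_{\infty}\le2\max_{ij}b_{ij}$ and $\sup_{t\in T}\|t\|_{2}\le2\max_{ij}b_{ij}$. This collapse of the radii of $T$ — which reflects the fact that the bilinear terms $v_{i}v_{j}$ cannot spread the mass of $v$ — is what ultimately yields the factor $\max_{ij}b_{ij}$ rather than something larger such as $\sigma=\max_{i}(\sum_{j}b_{ij}^{2})^{1/2}$.

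The main tool is a comparison of weak and strong moments for such canonical processes. After a symmetrization step, the contraction principle (to replace $\xi_{ij}$ by $\varepsilon_{ij}|\xi_{ij}|$ if needed), and using that the $\alpha$-exponential hypothesis supplies the regularity $\|\xi_{ij}\|_{2q}\le\lambda\|\xi_{ij}\|_{q}$ with $\lambda=2^{1/\alpha}\theta_{2}/\theta_{1}$, such a comparison gives for every $p\ge1$
\begin{align}
(\textsf{E}\Vert X\Vert^{p})^{1/p}\le C(\lambda)\Big(\textsf{E}\Vert X\Vert+\sup_{t\in T}\big(\textsf{E}|\langle\xi,t\rangle|^{p}\big)^{1/p}\Big).\nonumber
\end{align}
The strong moments are now purely one-dimensional: for fixed $t$, $\langle\xi,t\rangle$ is a sum of independent $\alpha$-exponential variables, and feeding the decay profile $t^{*}_{k}\lesssim(\max_{ij}b_{ij})\,k^{-1/2}$ into the sharp moment bounds for such sums gives $\sup_{t\in T}(\textsf{E}|\langle\xi,t\rangle|^{p})^{1/p}\lesssim_{\theta_{2},\alpha}(\sqrt{p}+p^{1/\alpha})\max_{ij}b_{ij}$, which for $0<\alpha\le2$ is $\lesssim p^{1/\alpha}\max_{ij}b_{ij}$ (for $\alpha>2$ one argues separately through the subgaussian concentration of $\Vert X\Vert$). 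Hence $(\textsf{E}\Vert X\Vert^{p})^{1/p}\le C_{\theta_{1},\theta_{2},\alpha}(\textsf{E}\Vert X\Vert+p^{1/\alpha}\max_{ij}b_{ij})$. The matching lower bound is elementary: $(\textsf{E}\Vert X\Vert^{p})^{1/p}\ge\textsf{E}\Vert X\Vert$ by Jensen, and taking indices $(i_{0},j_{0})$ with $b_{i_{0}j_{0}}=\max_{ij}b_{ij}$ gives $(\textsf{E}\Vert X\Vert^{p})^{1/p}\ge b_{i_{0}j_{0}}\|\xi_{i_{0}j_{0}}\|_{p}\ge\theta_{1}p^{1/\alpha}\max_{ij}b_{ij}$, so that $(\textsf{E}\Vert X\Vert^{p})^{1/p}\asymp_{\theta_{1},\theta_{2},\alpha}\textsf{E}\Vert X\Vert+p^{1/\alpha}\max_{ij}b_{ij}$.

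Finally I would extract the two tails. For the upper tail, Chebyshev's inequality with $p=(t/(eC))^{\alpha}$, where $C$ is the constant in the upper moment bound, gives $\textsf{P}\{\Vert X\Vert\ge eC\,\textsf{E}\Vert X\Vert+t\max_{ij}b_{ij}\}\le e^{-p}$, i.e. the first inequality with $C_{1}=eC$, $c_{1}=(eC)^{-\alpha}$, and a suitably large $C_{2}$ handling the trivial range $t\le eC$. For the lower tail, applying the Paley--Zygmund inequality to $Z=\Vert X\Vert^{p}$ and using the two-sided moment estimates at levels $p$ and $2p$ to bound $(\textsf{E}Z)^{2}/\textsf{E}Z^{2}$ from below by $e^{-C'p}$ yields $\textsf{P}\{\Vert X\Vert\ge\tfrac12(\textsf{E}\Vert X\Vert^{p})^{1/p}\}\ge\tfrac14e^{-C'p}$; choosing $p\asymp t^{\alpha}$ and inserting the lower moment bound gives the second inequality, and since the upper tail can always be relaxed (larger $C_{1}$, smaller $c_{1}$) the required ordering $C_{3}<C_{1}$, $C_{4}<C_{2}$, $c_{1}<c_{2}$ is arranged at the end. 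I expect the main obstacle to be the joint use of the weak--strong moment comparison and the estimate of the strong moments over $T$: one must justify the comparison with the stated dependence on $\theta_{1},\theta_{2},\alpha$, and — more delicately — show that the one-dimensional moment growth along $T$ is of order $p^{1/\alpha}\max_{ij}b_{ij}$, which forces one to exploit the decay $t^{*}_{k}\lesssim(\max_{ij}b_{ij})\,k^{-1/2}$ rather than a crude termwise estimate, the latter being off by a factor polynomial in $n$.
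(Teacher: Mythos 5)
Your route is the paper's route: rewrite $\Vert X\Vert=\sup_{t\in T}|\langle\xi,t\rangle|$ over the image $T$ of $B_2^n$ under the quadratic map, apply the Lata{\l}a--Strzelecka comparison of weak and strong moments (the regularity $\Vert\xi_{ij}\Vert_{2q}\le 2^{1/\alpha}(\theta_2/\theta_1)\Vert\xi_{ij}\Vert_q$ is exactly what the paper checks), control $\sup_{t\in T}\Vert\langle\xi,t\rangle\Vert_p$ by Gluskin--Kwapie\'n-type one-dimensional moment bounds together with $\sup_{t\in T}\Vert t\Vert_2\lesssim\max_{ij}b_{ij}$, match it from below via a single entry, and convert to tails by Markov and Paley--Zygmund. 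The only substantive deviations are cosmetic: the paper first passes to symmetric Weibull variables via the contraction principle so that the Gluskin--Kwapie\'n ($\alpha\ge1$) and Hitczenko--Montgomery-Smith--Oleszkiewicz ($0<\alpha\le1$) lemmas apply literally, whereas you invoke the comparison directly and symmetrize afterwards; and your rearrangement estimate $t_k^*\lesssim(\max_{ij}b_{ij})k^{-1/2}$ is finer than needed, since the bounds $\Vert t\Vert_2,\Vert t\Vert_{\alpha^*}\lesssim\max_{ij}b_{ij}$ already suffice. For $0<\alpha\le2$ your argument is correct and essentially identical to the paper's.

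The genuine gap is the case $\alpha>2$, which the theorem claims to cover. You correctly observe that the one-dimensional bound is $(\sqrt{p}+p^{1/\alpha})\max_{ij}b_{ij}$ and that the $\sqrt{p}$ term only collapses into $p^{1/\alpha}$ when $\alpha\le2$; but your proposed fix --- arguing ``through the subgaussian concentration of $\Vert X\Vert$'' --- can only produce a tail $e^{-ct^{2}}$, which is strictly weaker than the asserted $e^{-ct^{\alpha}}$ once $\alpha>2$ and $t$ is large. This is not a repairable detail: in the Wigner case with $\alpha>2$, taking $x=n^{-1/2}\mathbf{1}$ makes $x^\top Xx$ a normalized sum of $\sim n^2$ i.i.d.\ terms whose $p$-th moments grow like $\sqrt{p}$ up to $p\sim n^2$, so Paley--Zygmund gives $\textsf{P}\{\Vert X\Vert\ge u\}\gtrsim e^{-Cu^2}$ for $u\lesssim n$, which contradicts an upper bound of order $e^{-cu^{\alpha}}$ in that range. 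So no argument along these lines (nor any other) yields the first inequality for $\alpha>2$; the paper's own proof has the same defect, hidden in the line $\sup_{t\in T}(p^{1/2}\Vert t\Vert_2+p^{1/\alpha}\Vert t\Vert_{\alpha^*})\lesssim p^{1/\alpha}\max_{ij}b_{ij}$, which is only valid for $\alpha\le2$. You should either restrict to $0<\alpha\le2$ or replace the deviation term by $(\sqrt{t}+t^{1/\alpha})\max_{ij}b_{ij}$-type quantities for $\alpha>2$.
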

\begin{myrem}
	Let $A=(b_{ij}\xi_{ij})$ be an $N\times n$ asymmetric random matrix. Here $\{b_{ij}: 1\le i\le N, 1\le j\le n\}$ is a sequence of nonnegative constants and $\{ \xi_{ij}: 1\le i\le N, 1\le j\le n \}$ is a sequence of independent $\alpha$-exponential variables, $0<\alpha\le 2$. Recently, Adamczak et al  obtained the following deviation inequality (Proposition 1.16 in \cite{Adamczak}):
	\begin{align}\label{1}
		\textsf{P}\Big\{\Vert A\Vert\ge \max_{j\le n}\sqrt{\sum_{i\le N}b_{ij}^{2}}t  \Big\}\le Ce^{-ct^{\alpha}},
	\end{align}
	where $C, c$ are constants depending on $\theta_{1}, \theta_{2}, \alpha$. Set
	$$
	\hat{A}=\begin{bmatrix}
		0  &  A  \\
		A^{*}  &  0
	\end{bmatrix},$$
	where $A^{*}$ is the transpose of $A$.
	Then $\hat{A}$ is   symmetric  and  $\Vert \hat{A}\Vert=\Vert A\Vert$. Hence, Theorem \ref{Theo_deviation} yields that
	\begin{align}\label{2}
		\textsf{P}\{\Vert A\Vert\ge C_{1}\textsf{E}\Vert A\Vert+ t\max_{ij}b_{ij}  \}\le C_{2}e^{-c_{1}t^{\alpha}}.
	\end{align}
	With the result \eqref{Eq_optimal_bound}, our result \eqref{2} behaves better than \eqref{1}.
	To see this more clearly, consider the Wigner case (i.e. $b_{ij}=1$ for all $i, j$). Then \eqref{1} yields 
	\begin{align}
		\textsf{P}\{\Vert A\Vert\ge \sqrt{N}t  \}\le Ce^{-ct^{\alpha}}\nonumber
	\end{align}
	and \eqref{2} yields
	\begin{align}
		\textsf{P}\{\Vert A\Vert \ge \sqrt{N}t  \}\le C_{2}e^{-c_{3}(\sqrt{N}t)^{\alpha}}.\nonumber
	\end{align}
\end{myrem}

\section{Preliminaries}
\subsection{Notations}
For a fixed vector $x=(x_{1},\cdots, x_{n})^\top\in \mathbb{R}^{n}$, we denote by $\Vert x\Vert_{p}=(\sum\vert x_{i}\vert^{p})^{1/p}$ the $l_{p}$ norm. Set $S^{n}_{p}:=\{x\in\mathbb{R}^{n}: \Vert x\Vert_{p}=1 \}$ and $B^{n}_{p}:=\{x\in\mathbb{R}^{n}: \Vert x\Vert_{p}\le 1 \}$.
We use $\Vert \xi\Vert_{p}=(\textsf{E}\vert \xi\vert^{p})^{1/p}$ for the $L_{p}$ norm of a random variable $\xi$.
As for an $m\times n$ matrix $A=(a_{ij})$, we usually write $\Vert A\Vert$ as its spectral norm, i.e., $\Vert A\Vert=\sup_{x\in S^{n}_{2}}\Vert Ax\Vert_{2}$.

Unless otherwise stated, we denote by $C, C_{1}, c, c_{1},\cdots$ universal constants, and by $C(\delta), c(\delta),\cdots $ constants that depend only on the parameter $\delta$. 
For convenience, we write $f\lesssim g$ if $f\le Cg$ for some universal constant $C$ and write $f\lesssim_{\delta} g$ if $f\le C(\delta)g$ for some constant $C(\delta)$. We also say $f\asymp$ if $f\lesssim g$ and $g\lesssim f$, so does $f\asymp_{\delta} g$. 

We say $\xi$ is  a symmetric Weibull variable with the scale parameter $1$ and the shape parameter $\alpha$ if $-\log \textsf{P}\{\vert\xi\vert>x  \}=x^{\alpha}, x\ge 0$. For convenience, we shall write $\xi\sim \mathcal{W}_{s}(\alpha)$. Given a nonempty set $T\subset l_{2}:=\{t: \sum_{i}t_{i}^{2}<\infty \}$,  we call $\{S_{t}: t\in T \}$ a canonical process, where $S_{t}=\sum_{i=1}^{\infty}t_{i}\xi_{i}$ and $\{\xi_{i}\}$ are independent random variables.

\subsection{Tails and Moments}
In this subsection, we shall collect some properties about the tails  and the  moments of random variables. The following first lemma shows us how to deduce a tail bound from moment conditions. One can obtain this result by Markov's inequality and the Paley-Zygmund inequality.
\begin{mylem}[Lemma 4.6 in \cite{Latala_Inventions}]\label{Lem_moments1}
	Let $\xi$ be an $\alpha$-exponential random variable with parameters $\theta_{1}$ and $\theta_{2}$, i.e.,
	\begin{align}
		\theta_{1}p^{1/\alpha}\le (\textsf{E} \vert\xi\vert^{p})^{1/p}\le \theta_{2}p^{1/\alpha} \quad \text{for all}\,\, p\ge 2.\nonumber
	\end{align}
	Then there exist constants $c_{1}, c_{2}$ depending only on $\theta_{1}, \theta_{2}$ and $\alpha$ such that
	\begin{align}
		c_{1}e^{-t^{\alpha}/c_{1}}\le \textsf{P}\{\vert\xi\vert\ge t  \}\le c_{2}e^{-t^{\alpha}/c_{2}}\quad \text{for all}\,\, t\ge 0.\nonumber
	\end{align}
\end{mylem}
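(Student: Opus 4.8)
The plan is to prove the upper and lower tail bounds separately, each by a one‑sided control of the moments followed by an optimization over the free exponent $p$, exactly as suggested by Markov's and the Paley--Zygmund inequalities.

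For the upper tail I would apply Markov's inequality to $\vert\xi\vert^{p}$: for every real $p\ge 2$ and every $t>0$,
\begin{align}
	\textsf{P}\{\vert\xi\vert\ge t\}=\textsf{P}\{\vert\xi\vert^{p}\ge t^{p}\}\le\frac{\textsf{E}\vert\xi\vert^{p}}{t^{p}}\le\Big(\frac{\theta_{2}p^{1/\alpha}}{t}\Big)^{p}.\nonumber
\end{align}
Differentiating $p\mapsto p\log(\theta_{2}p^{1/\alpha}/t)$ shows the optimal choice is $p=e^{-1}(t/\theta_{2})^{\alpha}$, which is admissible (i.e. $\ge 2$) as soon as $t\ge(2e)^{1/\alpha}\theta_{2}$; plugging it in yields $\textsf{P}\{\vert\xi\vert\ge t\}\le\exp\!\big(-\tfrac{1}{e\alpha}(t/\theta_{2})^{\alpha}\big)$, a bound of the desired form $c_{2}e^{-t^{\alpha}/c_{2}}$. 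For $t$ below that threshold the inequality is automatic once $c_{2}$ is taken large enough — depending only on $\theta_{2}$ and $\alpha$ — that $c_{2}e^{-t^{\alpha}/c_{2}}\ge 1\ge\textsf{P}\{\vert\xi\vert\ge t\}$ on the bounded range $0\le t\le(2e)^{1/\alpha}\theta_{2}$.

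For the lower tail I would use the Paley--Zygmund inequality for the nonnegative variable $\vert\xi\vert^{p}$: with the parameter $\tfrac12$,
\begin{align}
	\textsf{P}\Big\{\vert\xi\vert^{p}>\tfrac12\,\textsf{E}\vert\xi\vert^{p}\Big\}\ge\frac14\cdot\frac{(\textsf{E}\vert\xi\vert^{p})^{2}}{\textsf{E}\vert\xi\vert^{2p}}.\nonumber
\end{align}
The two‑sided moment hypothesis bounds the numerator below by $(\theta_{1}p^{1/\alpha})^{2p}$ and the denominator above by $(\theta_{2}(2p)^{1/\alpha})^{2p}$, so the right‑hand side is at least $\tfrac14\big(\theta_{1}/(2^{1/\alpha}\theta_{2})\big)^{2p}=\tfrac14 e^{-c'p}$ with $c'=2\log\!\big(2^{1/\alpha}\theta_{2}/\theta_{1}\big)>0$ (note $\theta_{1}\le\theta_{2}$ is forced by the hypothesis). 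On the left the event equals $\{\vert\xi\vert>2^{-1/p}(\textsf{E}\vert\xi\vert^{p})^{1/p}\}$, which is contained in $\{\vert\xi\vert>2^{-1/p}\theta_{1}p^{1/\alpha}\}\subseteq\{\vert\xi\vert>2^{-1/2}\theta_{1}p^{1/\alpha}\}$ since $2^{-1/p}\ge 2^{-1/2}$ for $p\ge 2$. Hence $\textsf{P}\{\vert\xi\vert>2^{-1/2}\theta_{1}p^{1/\alpha}\}\ge\tfrac14 e^{-c'p}$ for every real $p\ge 2$. Taking $p=(\sqrt2\,t/\theta_{1})^{\alpha}$, so that $2^{-1/2}\theta_{1}p^{1/\alpha}=t$, this reads $\textsf{P}\{\vert\xi\vert\ge t\}\ge\tfrac14 e^{-c'(\sqrt2\,t/\theta_{1})^{\alpha}}$ for all $t\ge 2^{1/\alpha-1/2}\theta_{1}$, a bound of the required form $c_{1}e^{-t^{\alpha}/c_{1}}$. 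For smaller $t$ I would use monotonicity of $t\mapsto\textsf{P}\{\vert\xi\vert\ge t\}$ together with the fact that $\textsf{P}\{\vert\xi\vert\ge s\}>0$ for every finite $s$ — which holds because $(\textsf{E}\vert\xi\vert^{p})^{1/p}\ge\theta_{1}p^{1/\alpha}\to\infty$ forces $\vert\xi\vert$ to be unbounded — and then shrink $c_{1}$ so that also $c_{1}\le\textsf{P}\{\vert\xi\vert\ge 2^{1/\alpha-1/2}\theta_{1}\}$.

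The two optimizations and the threshold bookkeeping are routine; the one point that needs genuine care is the small‑$t$ regime, where the constraint $p\ge 2$ prevents a direct use of the optimal exponent and the bounded range of $t$ must instead be absorbed into the constants — this is where I would check most carefully that $c_{1}$ and $c_{2}$ depend only on $\theta_{1},\theta_{2}$ and $\alpha$.
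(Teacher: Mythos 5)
Your argument is correct and is exactly the route the paper indicates: the paper gives no proof of this lemma, only the remark that it follows from Markov's inequality and the Paley--Zygmund inequality (citing Lemma 4.6 of Latała--van Handel--Youssef), which is precisely your two-sided optimization over $p$. One small point of care: in the small-$t$ regime of the lower tail, do not choose $c_{1}$ via the raw probability $\textsf{P}\{\vert\xi\vert\ge 2^{1/\alpha-1/2}\theta_{1}\}$, which a priori depends on the law of $\xi$ and not only on $\theta_{1},\theta_{2},\alpha$; instead use the quantitative bound $\tfrac14 e^{-2c'}$ that your own Paley--Zygmund step already delivers at the threshold $p=2$, so that $c_{1}$ depends only on the stated parameters.
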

\begin{mylem}\label{Lem_Moments_2}
	Assume that a random variable $\xi$ satisfies for $p\ge p_{0}$
	\begin{align}
		\Vert \xi\Vert_{p}\le \sum_{k=1}^{m}C_{k}p^{\beta_{k}}+C_{m+1},\nonumber
	\end{align}
	where $C_{1},\cdots, C_{m+1}> 0$ and $\beta_{1},\cdots, \beta_{m}>0$. Then we have for any $t>0$,
	\begin{align}
		\textsf{P}\big\{ \vert \xi\vert>e(mt+C_{m+1}) \big\}\le e^{p_{0}}\exp\Big(-\min\big\{\big(\frac{t}{C_{1}}\big)^{1/\beta_{1}},\cdots, \big(\frac{t}{C_{m}}\big)^{1/\beta_{m}}\big\}\Big).\nonumber
	\end{align}
\end{mylem}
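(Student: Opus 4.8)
The plan is to use the standard moment-to-tail conversion via Markov's inequality, choosing $p$ to balance the dominant term. First I would fix $t>0$ and set $p := \min\{(t/C_1)^{1/\beta_1},\dots,(t/C_m)^{1/\beta_m}\}$. If $p < p_0$ one has to be careful, so I would split into the two cases $p \ge p_0$ and $p < p_0$; in the latter case the stated bound is trivial because the exponential factor $e^{p_0}\exp(-p)$ exceeds $1$ (using $p<p_0$), so the probability bound holds vacuously. Thus the substance is the case $p \ge p_0$.

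In the case $p\ge p_0$, the hypothesis gives $\Vert\xi\Vert_p \le \sum_{k=1}^m C_k p^{\beta_k} + C_{m+1}$. By the very choice of $p$, for each $k$ we have $p \le (t/C_k)^{1/\beta_k}$, i.e. $C_k p^{\beta_k} \le t$, so $\sum_{k=1}^m C_k p^{\beta_k} + C_{m+1} \le mt + C_{m+1}$. Hence $\Vert\xi\Vert_p \le mt + C_{m+1}$. Now apply Markov's inequality at threshold $e(mt + C_{m+1})$:
\begin{align}
\textsf{P}\big\{\vert\xi\vert > e(mt+C_{m+1})\big\} \le \frac{\textsf{E}\vert\xi\vert^p}{\big(e(mt+C_{m+1})\big)^p} = \Big(\frac{\Vert\xi\Vert_p}{e(mt+C_{m+1})}\Big)^p \le e^{-p}.\nonumber
\end{align}
Since $p \ge p_0 > 0$ we may freely write $e^{-p} \le e^{p_0} e^{-p} \cdot e^{-p_0} \le e^{p_0}e^{-p}$ — more to the point, $e^{-p} \le e^{p_0}\exp(-\min\{(t/C_1)^{1/\beta_1},\dots,(t/C_m)^{1/\beta_m}\})$ holds because $p$ equals that minimum and $e^{p_0}\ge 1$. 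This is exactly the claimed inequality.

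There is no real obstacle here; the only point requiring a little care is the bookkeeping when $p = \min_k (t/C_k)^{1/\beta_k}$ falls below $p_0$, where the hypothesis is not assumed to apply — but as noted the conclusion is then automatic since the right-hand side is at least $e^{p_0-p} > 1$. I would present the $p\ge p_0$ computation as the main line and dispatch the $p<p_0$ case in one sentence. One could also remark that the factor $e$ in the threshold and the factor $m$ multiplying $t$ are artifacts of this crude balancing and are harmless for our applications.
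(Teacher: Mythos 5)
Your proposal is correct and follows essentially the same route as the paper: define $f(t)=\min_k (t/C_k)^{1/\beta_k}$, bound $\Vert\xi\Vert_{f(t)}\le mt+C_{m+1}$ when $f(t)\ge p_0$, apply Markov's inequality at threshold $e\Vert\xi\Vert_{f(t)}$, and dispose of the case $f(t)<p_0$ by noting the right-hand side then exceeds $1$. No substantive differences.
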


\begin{proof}
	Consider the following function,
	\begin{align}
		f(t):=\min\big\{\big(\frac{t}{C_{1}}\big)^{1/\beta_{1}},\cdots, \big(\frac{t}{C_{m}}\big)^{1/\beta_{m}}\big\}.\nonumber
	\end{align}
	If we assume $f(t)\ge p_{0}$, we can estimate 
	\begin{align}
		\Vert \xi\Vert_{f(t)}\le \sum_{k=1}^{m}C_{k}f(t)^{\beta_{k}}+C_{m+1}\le mt+C_{m+1}.\nonumber
	\end{align}
	Hence, we have by Markov's inequality
	\begin{align}
		\textsf{P}\{\vert \xi\vert>e(mt+C_{m+1})  \}\le \textsf{P}\{\vert \xi\vert >e \Vert \xi\Vert_{f(t)}  \}\le e^{-f(t)}.\nonumber
	\end{align}
	As for $f(t)<p_{0}$, we have the following trival bound
	\begin{align}
		\textsf{P}\{\vert \xi\vert>e(mt+C_{m+1})  \}\le 1\le e^{p_{0}}e^{-f(t)}.\nonumber
	\end{align}
	Hence, we have for $t\ge 0$
	\begin{align}
		\textsf{P}\{\vert \xi\vert>e(mt+C_{m+1})  \}\le  e^{p_{0}}e^{-f(t)}.\nonumber
	\end{align}
\end{proof}

We say a random variable $\xi$ has a logconcave tail if $\log\textsf{P}\{\vert \xi\vert>x  \}=x^{\alpha}$ is a concave function. Random variables with logconvex tails are similarly defined. Obviously, symmetric Weibull variables have logconcave tails when $\alpha\ge 1$ and logconvex tails when $0<\alpha\le 1$.

\begin{mylem}[Theorem 1 in \cite{Gluskin_studia_math_tail}]\label{Lem_chaos1_weibull}
	Let $\xi_{1}, \cdots, \xi_{n}\stackrel{\text{i.i.d.}}{\sim}\mathcal{W}_{s}(\alpha)$ with $\alpha\ge 1$. We have for $p\ge 1$
	\begin{align}\label{Eq_Gluskin}
		\big\Vert\sum_{i\le n}a_{i}\xi_{i}  \big\Vert_{p}\asymp_{\alpha} p^{1/2}\big(\sum_{i>p}(a_{i}^{*})^{2}\big)^{1/2}+p^{1/\alpha}\big(\sum_{i\le p}(a_{i}^{*})^{\alpha^{*}}\big)^{1/\alpha^{*}},
	\end{align}
	where $a=(a_{1},\cdots, a_{n})$ is a fixed vector, $a^{*}=(a^{*}_{i})$ is the nonincreasing rearrangement of $(\vert a_{i}\vert)$, and $\alpha^{*}=\alpha/(\alpha-1)$.
\end{mylem}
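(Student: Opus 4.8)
The plan is to transfer the two‑sided moment estimate to Rademacher sums, where the sharp form of Khintchine's inequality is available, and then to reduce to two moment computations for weighted sums of exponentials. I may assume $a=a^{*}$, i.e. $a_{1}\ge a_{2}\ge\cdots\ge 0$, since the $\xi_{i}$ are i.i.d.\ and symmetric. Because $\textsf{P}\{\vert\xi_{i}\vert>t\}=e^{-t^{\alpha}}$, one has the distributional identity $\xi_{i}\stackrel{d}{=}\varepsilon_{i}E_{i}^{1/\alpha}$ with $\{E_{i}\}$ i.i.d.\ standard exponential, $\{\varepsilon_{i}\}$ i.i.d.\ Rademacher, the two families independent. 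Conditioning on $(E_{i})$, applying the sharp rearrangement form of Khintchine's inequality $\Vert\sum_{i}b_{i}\varepsilon_{i}\Vert_{p}\asymp\sum_{i\le p}b^{\downarrow}_{i}+\sqrt{p}\big(\sum_{i>p}(b^{\downarrow}_{i})^{2}\big)^{1/2}$ (with $b^{\downarrow}$ the nonincreasing rearrangement of $(\vert b_{i}\vert)$), and then using Fubini and $\Vert U+\sqrt{p}\,V\Vert_{p}\asymp\Vert U\Vert_{p}+\sqrt{p}\Vert V\Vert_{p}$ for nonnegative $U,V$, the lemma reduces to
\begin{align}
\Vert A\Vert_{p}+\sqrt{p}\Vert B\Vert_{p}\asymp_{\alpha}\Lambda_{1}+\Lambda_{2},\quad \Lambda_{1}:=\sqrt{p}\Big(\sum_{i>p}a_{i}^{2}\Big)^{1/2},\ \ \Lambda_{2}:=p^{1/\alpha}\Big(\sum_{i\le p}a_{i}^{\alpha^{*}}\Big)^{1/\alpha^{*}},\nonumber
\end{align}
where, writing $\sigma_{1}\ge\sigma_{2}\ge\cdots$ for the nonincreasing rearrangement of $(a_{j}E_{j}^{1/\alpha})_{j}$, we set $A:=\sum_{i\le p}\sigma_{i}$ and $B:=\big(\sum_{i>p}\sigma_{i}^{2}\big)^{1/2}$.

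The hard part is the upper bound $\Vert A\Vert_{p}+\sqrt{p}\Vert B\Vert_{p}\lesssim_{\alpha}\Lambda_{1}+\Lambda_{2}$, and within it the term $\sqrt{p}\Vert B\Vert_{p}$. For $\Vert A\Vert_{p}$, an order‑statistics bound using that $a$ is already sorted shows the $p$ largest products $a_{j}E_{j}^{1/\alpha}$ come, up to a lower‑order correction absorbed into $\Lambda_{1}+\Lambda_{2}$, from indices $j\le Cp$; since $\Vert E^{1/\alpha}\Vert_{q}=\Gamma(1+q/\alpha)^{1/q}\asymp_{\alpha}q^{1/\alpha}$, a moment estimate for sums of nonnegative independent variables gives $\Vert\sum_{j\le Cp}a_{j}E_{j}^{1/\alpha}\Vert_{p}\asymp_{\alpha}\sum_{j\le Cp}a_{j}+p^{1/\alpha}\big(\sum_{j\le Cp}a_{j}^{\alpha^{*}}\big)^{1/\alpha^{*}}$, and the monotonicity of $a$ together with Hölder's inequality for the conjugate pair $(\alpha,\alpha^{*})$ (so $\sum_{j\le Cp}a_{j}\le(Cp)^{1/\alpha}\big(\sum_{j\le Cp}a_{j}^{\alpha^{*}}\big)^{1/\alpha^{*}}$ and $\sum_{j\le Cp}a_{j}^{\alpha^{*}}\le C\sum_{j\le p}a_{j}^{\alpha^{*}}$) bring this to $\lesssim_{\alpha}\Lambda_{2}$. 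For $\sqrt{p}\Vert B\Vert_{p}$ one must observe that $B^{2}=\sum_{j}a_{j}^{2}E_{j}^{2/\alpha}$ with its $p$ largest squared summands deleted, and that the deletion is of the top $p$ \emph{products}, not the top $p$ \emph{indices}; the crux is an order‑statistics lemma showing this deletion removes all heavy contributions up to shifting the cut‑off from rank $p$ to rank $cp$ (for $\alpha<2$ the summands $E_{j}^{2/\alpha}$ have logconvex tails, so without the deletion no concentration holds). This yields $\sqrt{p}\Vert B\Vert_{p}\lesssim_{\alpha}\sqrt{p}\big(\sum_{j>cp}a_{j}^{2}\big)^{1/2}+(\text{lower order})$, and the excess $\sqrt{p}\big(\sum_{p<j\le cp}a_{j}^{2}\big)^{1/2}\le\sqrt{c}\,p\,a_{p}$ is absorbed into $\Lambda_{2}$ because $\sum_{i\le p}a_{i}^{\alpha^{*}}\ge p\,a_{p}^{\alpha^{*}}$ forces $p\,a_{p}\le\Lambda_{2}$. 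I expect this order‑statistics lemma (needed twice) to be the main obstacle, together with the bookkeeping of the boundary case $\alpha=1$, where $\alpha^{*}=\infty$ and $\Lambda_{2}=p\,a_{1}$.

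For the lower bound, $\Vert A\Vert_{p}\ge\Vert\sum_{j\le p}a_{j}E_{j}^{1/\alpha}\Vert_{p}\gtrsim_{\alpha}\Lambda_{2}$ by the lower half of the nonnegative‑sum moment estimate; alternatively one uses $\Vert\sum_{i}a_{i}\xi_{i}\Vert_{p}\ge\Vert\sum_{i\le p}a_{i}\xi_{i}\Vert_{p}$ (symmetry and convexity of $\vert\cdot\vert^{p}$) together with the large‑deviation event $\{\xi_{i}\ge s_{i}:i\le p\}$, $s_{i}=(p/\sum_{j\le p}a_{j}^{\alpha^{*}})^{1/\alpha}a_{i}^{1/(\alpha-1)}$ (when $\alpha>1$), which has probability $2^{-p}e^{-p}$ and forces $\sum_{i\le p}a_{i}\xi_{i}\ge\Lambda_{2}$; when $\alpha=1$ one instead uses $\Vert\sum_{i}a_{i}\xi_{i}\Vert_{p}\ge a_{1}\Vert\xi_{1}\Vert_{p}\asymp_{\alpha}a_{1}p$. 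For $\Lambda_{1}$ one splits into cases: if $\Lambda_{1}\lesssim_{\alpha}\Lambda_{2}$ nothing is needed, while otherwise $\sum_{j>Cp}a_{j}^{2}\gtrsim\sum_{j>p}a_{j}^{2}$ and there remain $\gtrsim p$ nonzero coordinates of rank $>Cp$ with negligible flatness ratio, so a Lindeberg/central‑limit (or again sharp‑Khintchine) argument yields $\Vert\sum_{j>Cp}a_{j}\xi_{j}\Vert_{p}\gtrsim_{\alpha}\sqrt{p}\big(\sum_{j>Cp}a_{j}^{2}\big)^{1/2}\gtrsim_{\alpha}\Lambda_{1}$, which is $\le\Vert\sum_{i}a_{i}\xi_{i}\Vert_{p}$. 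Combining the two contributions gives $\Vert\sum_{i}a_{i}\xi_{i}\Vert_{p}\gtrsim_{\alpha}\Lambda_{1}+\Lambda_{2}$, and with the reduction of the first paragraph the proof is complete.
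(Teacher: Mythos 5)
The paper offers no proof of this lemma at all: it is imported verbatim as Theorem 1 of Gluskin and Kwapien \cite{Gluskin_studia_math_tail}, so the only meaningful comparison is with that reference. Measured against it, your route is genuinely different. Gluskin--Kwapien prove $\Vert\sum_i a_i\xi_i\Vert_p\asymp\sup\{\sum_i a_ib_i:\sum_i\hat N(b_i)\le p\}$ for general logconcave tails, obtaining the upper bound from an exponential--Chebyshev estimate of the Laplace transform and the lower bound from an explicit large-deviation event plus Paley--Zygmund; specializing $N(t)=t^{\alpha}$ yields \eqref{Eq_Gluskin}. Your lower bound is essentially theirs and is correct: the restriction to the top $p$ coordinates via Jensen, the event $\{\xi_i\ge s_i\}$ with $\sum_i s_i^{\alpha}=p$ and probability $2^{-p}e^{-p}$, the identity $\sum_i a_is_i=\Lambda_2$, and the separate treatment of $\Lambda_1$ all check out (the $\Lambda_1$ part is sketchy but standard). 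The reduction via $\xi_i\stackrel{d}{=}\varepsilon_iE_i^{1/\alpha}$, conditional sharp Khintchine with rearrangements, and Fubini is also legitimate.

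The genuine gap is in the upper bound, where everything rests on the ``order-statistics lemma'' that you invoke twice, identify as the main obstacle, and do not prove. Moreover, the heuristic offered for it is false as stated: it is not true, even up to a pointwise lower-order correction, that the $p$ largest products $a_jE_j^{1/\alpha}$ come from indices $j\le Cp$ --- an exceptionally large $E_j$ with $j$ large can enter the top $p$, and the natural domination of the leftover (the top $p$ of $\{a_jE_j^{1/\alpha}:j>Cp\}$ is bounded by $a_{Cp}$ times the top $p$ order statistics of the exponentials) produces a term of order $a_{Cp}\,p\,(\log(n/p))^{1/\alpha}$, which exceeds $\Lambda_2\ge p\,a_p$ by an unbounded factor in $n$. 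The correct statement is a moment bound that exploits the decay of $(a_j)$, and proving it --- especially for $\sqrt p\,\Vert B\Vert_p$, where for $\alpha<2$ the summands $E_j^{2/\alpha}$ have logconvex tails and only the trimming prevents the largest terms from dominating --- is the entire technical content of the upper bound. It requires the kind of careful analysis of trimmed sums of order statistics carried out in \cite{Hitczenko_studia} and in Lata{\l}a's work, not a one-line absorption. As written the proposal is a plausible program, one that has been executed in the literature, but not a proof; the shortest complete argument remains the Laplace-transform upper bound of \cite{Gluskin_studia_math_tail}.
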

Indeed, Gluskin and Kwapien showed \eqref{Eq_Gluskin} when $\xi_{i}$ are independent random variables with logconcave tails. We only cover the symmetric Weibull case here for simplicity. We recommend interested readers to read \cite{,Adamczak_PTRF,Gluskin_studia_math_tail,Latala_studia_math_tail} to learn about similar results for random variables with logconcave tails.

\begin{mylem}[Theorem 1.1 in \cite{Hitczenko_studia}]\label{Lem_comparison_2}
	Let $\xi_{i}$ be independent symmetric random variables with logconvex tails. Then for any $p\ge 2$,
	\begin{align}
		\Vert \sum_{i=1}^{n}\xi_{i}\Vert_{p}\asymp \Big( \sum_{i=1}^{n}\textsf{E}\vert\xi_{i}\vert^{p}  \Big)^{1/p}+\sqrt{p}\Big(\sum_{i=1}^{n}\textsf{E}\xi_{i}^{2}  \Big)^{1/2}\nonumber
	\end{align}
\end{mylem}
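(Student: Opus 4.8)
The plan is to prove the two bounds underlying $\asymp$ separately. The lower bound is the easy direction; the logconvexity hypothesis — concavity of $N_i(x):=-\log\textsf{P}\{\vert\xi_i\vert>x\}$, equivalently the decreasing-hazard-rate property, which forces each $\vert\xi_i\vert$ to behave like an exponential variable on every scale and in particular forbids any sharp tail cutoff — carries the weight of the upper bound.

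\emph{Lower bound.} One always has $\Vert\sum_i\xi_i\Vert_p\ge\Vert\sum_i\xi_i\Vert_2=(\sum_i\textsf{E}\xi_i^2)^{1/2}$, and the elementary inequality $\tfrac12(\vert a+b\vert^p+\vert a-b\vert^p)\ge\vert a\vert^p+\vert b\vert^p$ for $p\ge2$, applied coordinate by coordinate after writing $\xi_i\overset{d}{=}\varepsilon_i\vert\xi_i\vert$ (with $(\varepsilon_i)$ independent signs) and conditioning on everything but $\varepsilon_i$, peels off one coordinate at a time and gives $\textsf{E}\vert\sum_i\xi_i\vert^p\ge\sum_i\textsf{E}\vert\xi_i\vert^p$. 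To recover the factor $\sqrt p$ in front of the second term, argue by dichotomy at the even integer $q:=2\lfloor p/2\rfloor$: in the regime where $(\sum_i\textsf{E}\vert\xi_i\vert^q)^{1/q}$ is small compared to $\sqrt q(\sum_i\textsf{E}\xi_i^2)^{1/2}$, the standard even-moment expansion of $\textsf{E}(\sum_i\xi_i)^{q}$ (whose main term $(q-1)!!(\sum_i\textsf{E}\xi_i^2)^{q/2}$ then dominates the higher-moment corrections) gives $\Vert\sum_i\xi_i\Vert_p\ge\Vert\sum_i\xi_i\Vert_q\gtrsim\sqrt p(\sum_i\textsf{E}\xi_i^2)^{1/2}$; in the complementary regime one uses that logconvex tails make $\Vert\xi_i\Vert_q\asymp\Vert\xi_i\Vert_p$ (so $(\sum_i\textsf{E}\vert\xi_i\vert^q)^{1/q}\lesssim(\sum_i\textsf{E}\vert\xi_i\vert^p)^{1/p}$), whence the first term already majorises $\sqrt p(\sum_i\textsf{E}\xi_i^2)^{1/2}$.

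\emph{Upper bound.} Fix $p\ge2$, let $u>0$ be minimal with $\sum_i\textsf{P}\{\vert\xi_i\vert>u\}\le1$, and write $\xi_i=\xi_i'+\xi_i''$ with $\xi_i'=\xi_i\mathbb{I}_{\vert\xi_i\vert\le u}$, $\xi_i''=\xi_i\mathbb{I}_{\vert\xi_i\vert>u}$; since $\sum_i\textsf{E}\vert\xi_i\vert^p\ge(\sum_i\textsf{P}\{\vert\xi_i\vert>u\})u^p\asymp u^p$, this choice forces $u\lesssim(\sum_i\textsf{E}\vert\xi_i\vert^p)^{1/p}$. It then suffices to bound each of $\Vert\sum_i\xi_i'\Vert_p$ and $\Vert\sum_i\xi_i''\Vert_p$ by the full right-hand side. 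For the tail part $\sum_i\xi_i''$ — a symmetric sum with, on average, at most one nonzero summand — condition on the set of nonzero indices and use concavity of the $N_i$ to control the conditional law of the overshoots; configurations with many nonzero summands are then seen to contribute negligibly. For the bounded part $\sum_i\xi_i'$ one needs a moment inequality for bounded symmetric sums sharp enough that its error term is $(\sum_i\textsf{E}\vert\xi_i'\vert^p)^{1/p}\le(\sum_i\textsf{E}\vert\xi_i\vert^p)^{1/p}$, not the $p\max_i\Vert\xi_i'\Vert_\infty$ of a naive Bernstein estimate (which, for $p$ large, is genuinely too big); concavity of $N_i$ on $[0,u)$ — which says that most of the mass of $\xi_i'$ lies well below $u$, so that $\xi_i'$ is not concentrated near the truncation level — is precisely what delivers this.

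\emph{Main obstacle.} The crux is this last step: converting the qualitative logconvexity hypothesis into a sharp Rosenthal-type moment inequality for the truncated pieces, with the correct $\sqrt p$ and no spurious polynomial or logarithmic loss in $p$. A less hands-on alternative is to specialise Latała's general formula for the moments of sums of independent symmetric random variables to the logconvex case, where the infimum defining it can be evaluated explicitly and collapses to the two terms above.
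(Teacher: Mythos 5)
First, note that the paper offers no proof of this lemma: it is imported verbatim as Theorem 1.1 of Hitczenko--Montgomery-Smith--Oleszkiewicz \cite{Hitczenko_studia}, so your attempt can only be measured against that literature. Your architecture for the upper bound (truncate at the level $u$ where $\sum_i\textsf{P}\{\vert\xi_i\vert>u\}\approx 1$, so that $u\lesssim(\sum_i\textsf{E}\vert\xi_i\vert^p)^{1/p}$) is indeed the standard opening move. But the step you yourself call ``the crux'' --- a Rosenthal-type bound for $\sum_i\xi_i'$ whose error term is $(\sum_i\textsf{E}\vert\xi_i'\vert^p)^{1/p}$ rather than $p\max_i\Vert\xi_i'\Vert_\infty$ --- is not a technical loose end to be deferred: it \emph{is} the theorem, and ``concavity of $N_i$ on $[0,u)$ is precisely what delivers this'' is an assertion, not an argument. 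The known proofs do something concrete at exactly this point: one exploits the representation $\vert\xi_i\vert\stackrel{d}{=}N_i^{-1}(E_i)$ with $E_i$ standard exponential and $N_i^{-1}$ convex, reduces by a comparison/contraction step to sums of weighted symmetric exponentials, and computes those moments explicitly; alternatively one specializes Lata{\l}a's general formula $\Vert\sum_i\xi_i\Vert_p\asymp\inf\{t>0:\sum_i\log\textsf{E}\vert 1+\xi_i/t\vert^p\le p\}$ --- the route you relegate to an afterthought, but which is the one that actually closes the argument. As written, the upper bound is not established.

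The lower bound also contains a step that fails. In your ``complementary regime'' you invoke (i) $\Vert\xi_i\Vert_q\asymp\Vert\xi_i\Vert_p$ for logconvex tails and (ii) the deduction $(\sum_i\textsf{E}\vert\xi_i\vert^q)^{1/q}\lesssim(\sum_i\textsf{E}\vert\xi_i\vert^p)^{1/p}$. For (i) the constant is not universal: a symmetric Weibull variable of shape $\alpha\le 1$ has $\Vert\xi\Vert_p/\Vert\xi\Vert_q\asymp(p/q)^{1/\alpha}$, which is unbounded as $\alpha\to 0$ even for $q\le p\le q+2$. Claim (ii) is worse: the number of summands enters with exponent $1/q$ on the left and $1/p$ on the right, so the inequality can fail by a factor $n^{1/q-1/p}$ even when the individual moments are comparable. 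Concretely, for $n$ i.i.d.\ symmetric exponentials with $q=2$, $p=3$, one has $(\sum_i\textsf{E}\xi_i^2)^{1/2}\asymp\sqrt n$ but $(\sum_i\textsf{E}\vert\xi_i\vert^3)^{1/3}\asymp n^{1/3}$, so in your complementary regime the first term does not majorize $\sqrt p(\sum_i\textsf{E}\xi_i^2)^{1/2}$. Your first regime is also shaky: the hypothesis only yields $\max_i\textsf{E}\xi_i^2\lesssim q\sum_i\textsf{E}\xi_i^2$, far from the condition $q^2\max_i\textsf{E}\xi_i^2\lesssim\sum_i\textsf{E}\xi_i^2$ under which the pairing term of the even-moment expansion dominates the repeated-index corrections. (The lower bound is of course true and is the easy half --- for bounded $p$ the trivial bound $\Vert\cdot\Vert_p\ge\Vert\cdot\Vert_2$ suffices, and for large $p$ one conditions on $(\vert\xi_i\vert)$ and uses a sharp lower Khintchine-type estimate together with logconvexity to handle the few largest summands --- but the dichotomy as you have set it up does not deliver it.)
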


\subsection{Contraction Principle}
In this subsection, we shall present a well-known contraction principle.  The interested reader is refered to Lemma 4.7 in \cite{Latala_Inventions} for an interesting analog.

\begin{mylem}[Lemma 4.6 in \cite{Ledoux_Talagrand_book}.]\label{Lem_contraction_principle}
	Let $F: \mathbb{R}^{+}\to \mathbb{R}^{+}$ be a convex function. Let further $\{\eta_{i}, i\le n\}$ and $\{\xi_{i}, i\le n\}$ be two symmetric sequences of independent random variables such that for some constant $K\ge 1$ and all $i\le n$ and $t>0$
	\begin{align}
		\textsf{P}\{\vert \eta_{i}\vert>t  \}\le K\textsf{P}\{\vert\xi_{i}\vert >t \}.\nonumber
	\end{align}
	Then, for any finite sequence $\{a_{i}, i\le n\}$ in a Banach space,
	\begin{align}
		\textsf{E}F\Big( \big\Vert \sum_{i=1}^{n}\eta_{i}a_{i}\big\Vert    \Big)\le \textsf{E}F\Big(K \big\Vert \sum_{i=1}^{n}\xi_{i}a_{i}\big\Vert   \Big).\nonumber
	\end{align}
	
\end{mylem}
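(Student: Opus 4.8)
The plan is to reduce the Banach-space comparison, one summand at a time, to a one-dimensional tail-comparison lemma, after using symmetry to bring in Rademacher signs.

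\textbf{Step 1 (symmetrization).} Since each $\eta_{i}$ is symmetric, $(\eta_{i})_{i\le n}$ has the same law as $(\varepsilon_{i}\vert\eta_{i}\vert)_{i\le n}$ for a Rademacher sequence $(\varepsilon_{i})$ independent of everything, and likewise $(\xi_{i})\stackrel{d}{=}(\varepsilon_{i}\vert\xi_{i}\vert)$. Writing $b_{i}:=\vert\eta_{i}\vert$ and $c_{i}:=\vert\xi_{i}\vert$, it suffices to prove
\[
\textsf{E}F\Big(\big\Vert\textstyle\sum_{i\le n}\varepsilon_{i}b_{i}a_{i}\big\Vert\Big)\le \textsf{E}F\Big(\big\Vert\textstyle\sum_{i\le n}\varepsilon_{i}(Kc_{i})a_{i}\big\Vert\Big),
\]
where $(b_{i})$, $(c_{i})$, $(\varepsilon_{i})$ are mutually independent, $b_{i},c_{i}\ge 0$, and $\textsf{P}\{b_{i}>t\}\le K\textsf{P}\{c_{i}>t\}$. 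No coupling between $(b_{i})$ and $(c_{i})$ is required, since each side depends only on the marginal laws.

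\textbf{Step 2 (a one-variable lemma).} I would first establish: if $g:\mathbb{R}^{+}\to\mathbb{R}^{+}$ is convex and nondecreasing and $b,c\ge 0$ satisfy $\textsf{P}\{b>t\}\le K\textsf{P}\{c>t\}$ with $K\ge 1$, then $\textsf{E}g(b)\le\textsf{E}g(Kc)$. Using $g(x)=g(0)+\int_{0}^{x}g'(s)\,ds$ with $g'\ge 0$ nondecreasing (the right derivative) and Tonelli,
\[
\textsf{E}g(b)=g(0)+\int_{0}^{\infty}g'(s)\,\textsf{P}\{b>s\}\,ds\le g(0)+K\int_{0}^{\infty}g'(s)\,\textsf{P}\{c>s\}\,ds,
\]
and then $g'(s)\le g'(Ks)$ together with the substitution $s\mapsto s/K$ turns the right-hand side into $g(0)+\int_{0}^{\infty}g'(r)\,\textsf{P}\{Kc>r\}\,dr=\textsf{E}g(Kc)$ (the claim being trivial if $\textsf{E}g(Kc)=\infty$). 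This is the one place where the factor $K$ in front of the tails — which cannot in general be removed by a coupling — is absorbed, namely into a dilation of the argument once one integrates against the nondecreasing density $g'$; I expect this to be the conceptual core of the proof.

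\textbf{Step 3 (peeling off one coordinate).} Set $\Sigma_{k}:=\sum_{i\le k}\varepsilon_{i}(Kc_{i})a_{i}+\sum_{i>k}\varepsilon_{i}b_{i}a_{i}$, so $\Sigma_{0}=\sum\varepsilon_{i}b_{i}a_{i}$ and $\Sigma_{n}=K\sum\varepsilon_{i}c_{i}a_{i}$; it is enough to show $\textsf{E}F(\Vert\Sigma_{k-1}\Vert)\le\textsf{E}F(\Vert\Sigma_{k}\Vert)$ for each $k$. Condition on all randomness except $(\varepsilon_{k},b_{k},c_{k})$ and let $u:=\sum_{i<k}\varepsilon_{i}(Kc_{i})a_{i}+\sum_{i>k}\varepsilon_{i}b_{i}a_{i}$ be the fixed common part. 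Averaging over $\varepsilon_{k}$ gives $\textsf{E}[F(\Vert\Sigma_{k-1}\Vert)\mid u]=\textsf{E}_{b_{k}}g_{u}(b_{k})$ and $\textsf{E}[F(\Vert\Sigma_{k}\Vert)\mid u]=\textsf{E}_{c_{k}}g_{u}(Kc_{k})$, where $g_{u}(x):=\tfrac12 F(\Vert u+xa_{k}\Vert)+\tfrac12 F(\Vert u-xa_{k}\Vert)$. Now $g_{u}$ is the even part of $x\mapsto F(\Vert u+xa_{k}\Vert)$, which is convex on $\mathbb{R}$ (the convex nondecreasing $F$ composed with the convex $x\mapsto\Vert u+xa_{k}\Vert$); an even convex function attains its minimum at $0$, hence is nondecreasing on $\mathbb{R}^{+}$, so $g_{u}:\mathbb{R}^{+}\to\mathbb{R}^{+}$ is convex and nondecreasing. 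Step 2 then yields $\textsf{E}_{b_{k}}g_{u}(b_{k})\le\textsf{E}_{c_{k}}g_{u}(Kc_{k})$; taking expectation over $u$ gives $\textsf{E}F(\Vert\Sigma_{k-1}\Vert)\le\textsf{E}F(\Vert\Sigma_{k}\Vert)$, and chaining $k=1,\dots,n$ and undoing Step 1 finishes the proof. (The argument uses that $F$ is convex \emph{and nondecreasing}, so that $F\circ\Vert\cdot\Vert$ is convex; this is automatic in our applications, where $F(x)=x^{p}$.) The main obstacle is exactly the removal of the factor $K$, handled in Step 2, the structural point enabling the coordinatewise reduction being that Rademacher-averaging a single summand turns it into an even — hence half-line-nondecreasing — convex function of its modulus.
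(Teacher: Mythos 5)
Your argument is correct, and it does not follow the route behind the paper's citation: the paper gives no proof of this lemma, quoting it from Ledoux--Talagrand (Lemma 4.6 there), whose classical argument runs differently. There one writes $\eta_i=\varepsilon_i|\eta_i|$, introduces independent Bernoulli variables $\delta_i$ with $\textsf{P}\{\delta_i=1\}=1/K$ so that $\delta_i|\eta_i|$ is stochastically dominated by $|\xi_i|$, uses a monotone coupling plus the Rademacher contraction principle to get $\textsf{E}F(\Vert\sum_i\varepsilon_i\delta_i|\eta_i|a_i\Vert)\le \textsf{E}F(\Vert\sum_i\xi_ia_i\Vert)$, and then reinstates the factor $K$ inside the norm by Jensen's inequality in the $\delta$-variables. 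You instead replace one coordinate at a time and absorb $K$ via the dilation $s\mapsto Ks$ in the one-dimensional tail-integration lemma of Step 2; this is self-contained (no coupling, no separate contraction principle needed), and Steps 2--3 check out, including the key observation that the Rademacher average $g_u$ is even and convex, hence nondecreasing on $\mathbb{R}^{+}$. The classical proof is shorter if one already has stochastic-domination coupling and Theorem 4.4 of Ledoux--Talagrand at hand; yours makes the role of the factor $K$ more transparent. Concerning your caveat about monotonicity: this is not a gap in your argument but a tacit hypothesis missing from the statement as quoted --- for merely convex $F$ the conclusion is false (take $n=1$, $\eta_1\equiv 0$, $\xi_1$ Rademacher, $K=1$, $\Vert a_1\Vert=1$, $F(x)=\max(1-x,0)$, which would give $1\le 0$), and the Ledoux--Talagrand proof also uses monotonicity to make $F\circ\Vert\cdot\Vert$ convex. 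Since the paper only applies the lemma with $F(x)=x^{p}$ (as in Corollary 2.1), the extra assumption is harmless.
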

\begin{mycor}\label{Cor_contraction_principle}
	Let $T$ be a nonempty subset of $\mathbb{R}^{n}$. $\{\eta_{i}\}$ and $\{\xi_{i}\}$ are independent random variables as in Lemma \ref{Lem_contraction_principle}. Then, we have for $p\ge 1$
	\begin{align}
		\textsf{E}\sup_{t\in T}\big\vert \sum_{i=1}^{n}\eta_{i}t_{i} \big\vert^{p}\le K^{p} \textsf{E} \sup_{t\in T}\big\vert \sum_{i=1}^{n}\xi_{i}t_{i} \big\vert^{p}.\nonumber
	\end{align}
\end{mycor}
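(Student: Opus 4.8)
The plan is to invoke Lemma \ref{Lem_contraction_principle} with the convex function $F(x)=x^{p}$ (which is convex on $\mathbb{R}^{+}$ because $p\ge 1$) and an appropriately chosen Banach space. First I would settle the case of a \emph{finite} set $T=\{t^{(1)},\dots,t^{(m)}\}\subset\mathbb{R}^{n}$. Take the Banach space to be $\mathbb{R}^{m}$ equipped with the $\ell_{\infty}$-norm and put $a_{i}:=(t^{(1)}_{i},\dots,t^{(m)}_{i})\in\mathbb{R}^{m}$ for $i\le n$. Then
\begin{align}
	\big\Vert \sum_{i=1}^{n}\eta_{i}a_{i}\big\Vert_{\infty}=\max_{j\le m}\big\vert \sum_{i=1}^{n}\eta_{i}t^{(j)}_{i}\big\vert=\sup_{t\in T}\big\vert \sum_{i=1}^{n}\eta_{i}t_{i}\big\vert,\nonumber
\end{align}
and similarly with $\xi$ in place of $\eta$. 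Since $F(Kx)=K^{p}F(x)$, feeding these two identities into Lemma \ref{Lem_contraction_principle} yields the asserted inequality for every finite $T$.

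Next I would remove the finiteness restriction. Because $T\subset\mathbb{R}^{n}$ is separable and, for each fixed realization of $\{\eta_{i}\}$, the map $t\mapsto\sum_{i}\eta_{i}t_{i}$ is continuous, one can fix a countable set $T_{0}\subseteq T$ (dense in $T$) with $\sup_{t\in T}\vert\sum_{i}\eta_{i}t_{i}\vert=\sup_{t\in T_{0}}\vert\sum_{i}\eta_{i}t_{i}\vert$; this simultaneously renders the supremum measurable, and the same $T_{0}$ serves for $\xi$. Write $T_{0}=\bigcup_{k\ge 1}T_{k}$ with finite sets $T_{1}\subseteq T_{2}\subseteq\cdots$. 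Then $\sup_{t\in T_{k}}\vert\sum_{i}\eta_{i}t_{i}\vert^{p}\uparrow\sup_{t\in T_{0}}\vert\sum_{i}\eta_{i}t_{i}\vert^{p}$ pointwise, so the monotone convergence theorem together with the finite case gives
\begin{align}
	\textsf{E}\sup_{t\in T}\big\vert \sum_{i}\eta_{i}t_{i}\big\vert^{p}&=\lim_{k\to\infty}\textsf{E}\sup_{t\in T_{k}}\big\vert \sum_{i}\eta_{i}t_{i}\big\vert^{p}\nonumber\\
	&\le\lim_{k\to\infty}K^{p}\,\textsf{E}\sup_{t\in T_{k}}\big\vert \sum_{i}\xi_{i}t_{i}\big\vert^{p}=K^{p}\,\textsf{E}\sup_{t\in T}\big\vert \sum_{i}\xi_{i}t_{i}\big\vert^{p}.\nonumber
\end{align}

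I do not anticipate a serious obstacle here. The only points that will require any care are the reduction to a countable (hence measurable) supremum and the passage to the limit by monotone convergence, and both become routine once the finite case is in hand; moreover, in the applications of this corollary one may take $T$ finite or countable from the outset, which makes the second step unnecessary.
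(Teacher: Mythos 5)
Your argument is correct and is essentially the paper's: the paper's accompanying remark applies Lemma \ref{Lem_contraction_principle} with $F(x)=x^{p}$ to the seminorm $\Vert x\Vert=\sup_{t\in T}\vert\sum_{i}x_{i}t_{i}\vert$ on $\mathbb{R}^{n}$ (taking $a_{i}=e_{i}$), which is the same device as your $\ell_{\infty}^{m}$ embedding for finite $T$. Your extra finite-to-countable step via monotone convergence is a harmless (indeed slightly more careful) elaboration of the same idea.
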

\begin{myrem}
	To prove Corollary \ref{Cor_contraction_principle}, one can apply Lemma \ref{Lem_contraction_principle} on $(\mathbb{R}^{n}, \Vert \cdot\Vert)$. Here, the (semi)norm $\Vert \cdot\Vert$ is defined as follows:
	\begin{align}
		\Vert x\Vert=\sup_{t\in T}\big\vert \sum_{i=1}^{n}x_{i}t_{i}\big\vert,\quad x\in\mathbb{R}^{n}.\nonumber
	\end{align}
	For simplicity, we do not cover the detailed proof here. One can also deduce Corollary \ref{Cor_contraction_principle} from Lemma 4.7 in \cite{Latala_Inventions} directly.
\end{myrem}


\subsection{Comparison of Weak and Strong Moments }\label{Section_majorizing_measur_theorem}

Let $\{S_{t}=\sum t_{i}\xi : t\in T \}$ be a canonical process. There is a trivial lower estimate:
\begin{align}
	\big\Vert \sup_{t\in T} \vert S_{t}\vert\big\Vert_{p}\ge \max\Big\{ \textsf{E}\sup_{t\in T}\vert S_{t}\vert , \sup_{t\in T}\Vert S_{t}\Vert_{p}    \Big\}.\nonumber
\end{align}
In some situations, the lower bound may be reversed:
\begin{align}\label{Eq_comparison_result}
	\big\Vert \sup_{t\in T} \vert S_{t}\vert\big\Vert_{p}\lesssim  \textsf{E}\sup_{t\in T}\vert S_{t}\vert + \sup_{t\in T}\Vert S_{t}\Vert_{p}.    
\end{align}
We call a result of the form \eqref{Eq_comparison_result} a comparison of weak and strong moments. In this subsection, we shall present some such results.

An obvious case where \eqref{Eq_comparison_result} holds is that $\xi_{i}$ are normally distributed. Indeed, one can directly obtain \eqref{Eq_comparison_result} via Gaussian concentration inequality. Dilworth and Montgomery-Smith \cite{Dilworth_ap} proved the inequality \eqref{Eq_comparison_result} for $\xi_{i}$ being Rademacher random variables. Latała \cite{Latala_studia_math_tail} generalized this result to symmetric variables with logconcave tails, and Latała and Tkocz \cite{Latala_EJP} further extended it to  variables satisfying $\Vert \xi_{i}\Vert_{q}\lesssim \alpha p/q\Vert \xi\Vert_{p}$ for all $q\ge p\ge 2$. In addition to the work mentioned above, we also refer interested readers to \cite{Chen_Talagrand'sfunctional, Latala_gafa,Talagrand_chaining_book} for more information on canonical processes.

We next introduce a comparison of weak and strong moments for more general variables, which was proved by Latała and Strzelecka \cite{Latala_mathematika}.

\begin{mylem}[Theorem 1.1 in \cite{Latala_mathematika}]\label{Lem_comparison}
	Let $\xi_{1}, \cdots, \xi_{n}$ be independent centered random variables with finite moments satisfying
	\begin{align}
		\Vert \xi\Vert_{2p}\le \alpha\Vert \xi\Vert_{p}, \quad \forall p\ge 2,\quad i=1,\cdots, n,\nonumber
	\end{align}
	where $\alpha$ is a finite positive constant. Then we have for any $p\ge 1$ and any non-empty set $T\subset \mathbb{R}^{n}$
	\begin{align}
		\big\Vert \sup_{t\in T} \vert \sum_{i=1}^{n}t_{i}\xi_{i}\vert\big\Vert_{p}\lesssim_{\alpha}  \textsf{E}\sup_{t\in T}\vert \sum_{i=1}^{n}t_{i}\xi_{i}\vert + \sup_{t\in T}\Vert \sum_{i=1}^{n}t_{i}\xi_{i}\Vert_{p}. \nonumber   
	\end{align}
\end{mylem}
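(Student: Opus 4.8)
The plan is to run the standard machinery for a comparison of weak and strong moments: a symmetrization step, a truncation of each coordinate variable at a level proportional to its $p$-th moment, a concentration inequality for the resulting bounded empirical process, and a reduction of the remaining heavy-tailed part, via the contraction principle, to a canonical Weibull process for which such a comparison is already available (cf.\ the results of Dilworth--Montgomery-Smith and Lata\l a recalled just before the statement). First, it is enough to treat $p\ge 2$: for $1\le p<2$ the left-hand side does not exceed its value at $p=2$ while $\sup_{t}\Vert S_{t}\Vert_{2}\le\sup_{t}\Vert S_{t}\Vert_{p}$, so the case $p=2$ implies the rest; here and below $S_{t}:=\sum_{i}t_{i}\xi_{i}$. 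Next, a symmetrization step --- replacing $\xi_{i}$ by $\xi_{i}-\xi_{i}'$ for an independent copy $\xi_{i}'$ and using that $\xi_{i}$ is centered --- lets us assume the $\xi_{i}$ are symmetric; since $\Vert\xi_{i}\Vert_{q}\le\Vert\xi_{i}-\xi_{i}'\Vert_{q}\le 2\Vert\xi_{i}\Vert_{q}$, the hypothesis survives with $\alpha$ replaced by $2\alpha$, and both sides change only by universal factors.

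\textbf{Truncation.}
Fix $p\ge 2$, set $u_{i}:=e\Vert\xi_{i}\Vert_{p}$, and split $\xi_{i}=\xi_{i}^{\flat}+\xi_{i}^{\sharp}$ with $\xi_{i}^{\flat}:=\xi_{i}\mathbb{I}_{\vert\xi_{i}\vert\le u_{i}}$ and $\xi_{i}^{\sharp}:=\xi_{i}\mathbb{I}_{\vert\xi_{i}\vert> u_{i}}$; both families remain independent and symmetric and $S_{t}=S_{t}^{\flat}+S_{t}^{\sharp}$, so by the triangle inequality in $L_{p}$ it suffices to bound $\Vert\sup_{t}\vert S_{t}^{\flat}\vert\Vert_{p}$ and $\Vert\sup_{t}\vert S_{t}^{\sharp}\vert\Vert_{p}$ separately. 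Two consequences of the hypothesis drive everything: Markov's inequality gives $\textsf{P}(\vert\xi_{i}\vert>u_{i})\le e^{-p}$, and iterating $\Vert\xi_{i}\Vert_{2q}\le\alpha\Vert\xi_{i}\Vert_{q}$ together with Lyapunov interpolation yields $\Vert\xi_{i}\Vert_{q}\lesssim_{\alpha}(q/p)^{\log_{2}\alpha}\Vert\xi_{i}\Vert_{p}$ for all $q\ge p$; hence every higher moment of $\xi_{i}$, and so of $\xi_{i}^{\sharp}$, is controlled by the $p$-th one with an $\alpha$-dependent factor, and $\xi_{i}^{\sharp}$ is stochastically dominated --- up to an $\alpha$-dependent constant --- by $\lambda_{i}W_{i}$ for a suitable scale $\lambda_{i}$ with $\Vert\lambda_{i}W_{i}\Vert_{p}\asymp_{\alpha}\Vert\xi_{i}\Vert_{p}$, where $W_{i}\sim\mathcal{W}_{s}(\beta)$ and $\beta=\beta(\alpha)$.

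\textbf{The two parts.}
For the bounded part, $S_{t}^{\flat}$ is a supremum of sums of independent symmetric summands bounded coordinatewise by $u_{i}$, so a Talagrand-type concentration inequality for suprema of empirical processes bounds $\Vert\sup_{t}\vert S_{t}^{\flat}\vert\Vert_{p}$ by $\textsf{E}\sup_{t}\vert S_{t}^{\flat}\vert$ plus a weak-variance term of order $\sqrt{p}\,\sup_{t}(\sum_{i}t_{i}^{2}\textsf{E}(\xi_{i}^{\flat})^{2})^{1/2}$ plus a Bernstein term of order $p\,\sup_{t}\max_{i}\vert t_{i}\vert u_{i}$. The first is $\lesssim\textsf{E}\sup_{t}\vert S_{t}\vert$ by the triangle inequality and Corollary \ref{Cor_contraction_principle}, while $\max_{i}\vert t_{i}\vert u_{i}=e\max_{i}\Vert t_{i}\xi_{i}\Vert_{p}\le e\Vert S_{t}\Vert_{p}$ because the $L_{p}$-norm of a sum of independent symmetric variables dominates that of each summand. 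For the tail part, Corollary \ref{Cor_contraction_principle} applied to the domination of $\xi_{i}^{\sharp}$ by $\lambda_{i}W_{i}$ reduces $\Vert\sup_{t}\vert S_{t}^{\sharp}\vert\Vert_{p}$ to the same quantity for the canonical Weibull process $\sum_{i}\lambda_{i}t_{i}W_{i}$, for which the comparison of weak and strong moments already holds; its expectation is again $\lesssim_{\alpha}\textsf{E}\sup_{t}\vert S_{t}\vert$ (triangle inequality and Corollary \ref{Cor_contraction_principle} applied to $\xi_{i}^{\flat}$), and its strong moments are controlled by $\sup_{t}\Vert S_{t}\Vert_{p}$ through the explicit moment estimates of Lemmas \ref{Lem_chaos1_weibull}--\ref{Lem_comparison_2} together with a Rosenthal-type lower bound and the inequality $\Vert t_{i}\xi_{i}\Vert_{p}\le\Vert S_{t}\Vert_{p}$.

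\textbf{The main obstacle.}
The genuinely delicate point --- and the reason the bare hypothesis $\Vert\xi\Vert_{2p}\le\alpha\Vert\xi\Vert_{p}$, weaker than the linear moment growth of Lata\l a--Tkocz, already suffices --- is that the naive execution above leaves the weak-variance and Bernstein terms of the concentration bound carrying spurious powers of $p$ relative to $\sup_{t}\Vert S_{t}\Vert_{p}$. Removing them, and likewise controlling the strong moments of the Weibull comparison process, calls for a multi-scale decomposition of each $\xi_{i}$ into the dyadic layers $\{u_{i}2^{k}<\vert\xi_{i}\vert\le u_{i}2^{k+1}\}$, applying the concentration estimate on each layer with its own Bernstein scale and summing a geometric series whose ratio is governed by $\log_{2}\alpha$; it is precisely the iterated moment inequality that forces this series to converge and collapses all constants to functions of $\alpha$. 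I expect that making the layered estimates add up, uniformly in $n$, $p$, and $T$, to $\textsf{E}\sup_{t}\vert S_{t}\vert+\sup_{t}\Vert S_{t}\Vert_{p}$ will be the technical heart of the proof.
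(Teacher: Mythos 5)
First, a point of reference: Lemma \ref{Lem_comparison} is not proved in this paper at all --- it is imported verbatim as Theorem 1.1 of Lata\l a and Strzelecka \cite{Latala_mathematika}, so there is no in-paper proof to compare yours against; the benchmark is the original article, whose argument is a substantial piece of work in its own right and does not follow the ``Talagrand concentration plus Weibull contraction'' scheme you outline. Judged on its own terms, your text is a plan rather than a proof, and you concede the decisive point yourself: the step you call the technical heart --- absorbing the weak-variance term $\sqrt{p}\,\sup_{t}\big(\sum_{i}t_{i}^{2}\textsf{E}(\xi_{i}^{\flat})^{2}\big)^{1/2}$ and the Bernstein term $p\,\sup_{t}\max_{i}\vert t_{i}\vert u_{i}$ into $\textsf{E}\sup_{t}\vert S_{t}\vert+\sup_{t}\Vert S_{t}\Vert_{p}$ via a dyadic layering --- is exactly the content of the theorem and is nowhere carried out. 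That these terms genuinely are not dominated by the right-hand side is visible already for Rademacher coordinates with $T=\{e_{1},\dots,e_{n}\}$: there $\textsf{E}\sup_{t}\vert S_{t}\vert=\sup_{t}\Vert S_{t}\Vert_{p}=1$, while the variance term is $\sqrt{p}$ and the Bernstein term is of order $p$, so the naive concentration bound is off by unbounded factors and the claimed geometric-series fix is precisely what would have to be proved, not ``expected''.

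Second, one of your reductions cannot work as stated. Dominating the tails of $\xi_{i}^{\sharp}$ by $\lambda_{i}W_{i}$ with $W_{i}\sim\mathcal{W}_{s}(\beta(\alpha))$ and invoking Corollary \ref{Cor_contraction_principle} does give an upper bound for $\Vert\sup_{t}\vert S_{t}^{\sharp}\vert\Vert_{p}$ in terms of the Weibull process, but you then need $\textsf{E}\sup_{t}\vert\sum_{i}\lambda_{i}t_{i}W_{i}\vert\lesssim_{\alpha}\textsf{E}\sup_{t}\vert S_{t}\vert$, and the contraction principle points the wrong way for that: it would require the tails of the $\xi_{i}$ to dominate those of $\lambda_{i}W_{i}$ up to an $\alpha$-dependent constant, i.e.\ a lower tail bound at a single common shape $\beta(\alpha)$. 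The hypothesis $\Vert\xi_{i}\Vert_{2p}\le\alpha\Vert\xi_{i}\Vert_{p}$ is only an upper regularity condition on the moment profile; it is satisfied simultaneously by Gaussian, symmetric exponential and bounded (e.g.\ Rademacher) coordinates, whose tails cannot all be compared two-sidedly with one Weibull variable of shape $\beta(\alpha)$ at scale $\Vert\lambda_{i}W_{i}\Vert_{p}\asymp\Vert\xi_{i}\Vert_{p}$, and your parenthetical justification (``triangle inequality and Corollary \ref{Cor_contraction_principle} applied to $\xi_{i}^{\flat}$'') does not produce this reverse comparison. Likewise the control of the strong moments of the Weibull process by $\sup_{t}\Vert S_{t}\Vert_{p}$ through ``a Rosenthal-type lower bound'' is asserted rather than proved. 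So the proposal assembles plausible ingredients but leaves the central estimates unestablished and relies on at least one step that fails in the stated generality; it is not a proof of Lemma \ref{Lem_comparison}, nor does it reflect the route taken in \cite{Latala_mathematika}.
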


\section{Proof of Theorem \ref{Theo_deviation}}

\begin{proof}[Proof of Theorem \ref{Theo_deviation}]
	We first assume $\xi_{ij}\sim \mathcal{W}_{s}(\alpha)$. Note that for the symmetric matrix $X$
	\begin{align}\label{Eq_proofmain_spectral}
		\Vert X\Vert=\sup_{x\in B^{n}_{2}} \vert x^{\top}Xx\vert=\sup_{x\in B^{n}_{2}}\Big\vert \sum_{i=1}^{n}x_{i}^{2}b_{ii}\xi_{ii}+2\sum_{1\le j<i\le n}x_{i}x_{j}b_{ij}\xi_{ij}\Big\vert.
	\end{align}
	For convenience, let $$T:=\{t=(b_{11}x_{1}^{2}, 2b_{12}x_{1}x_{2},\cdots, 2b_{1n}x_{1}x_{n}, b_{22}x_{2}^{2}, \cdots, 2b_{2n}x_{2}x_{n},\cdots, b_{nn}x_{n}^{2})^{\top}: (x_{1}, \cdots,x_{n})^{\top}\in B_{2}^{n}\}$$
	and 
	\begin{align}
		\xi(X)=(\xi_{11}, \xi_{12},\cdots, \xi_{1n}, \xi_{22}, \cdots, \xi_{2n}, \cdots, \xi_{nn})^\top.\nonumber
	\end{align}
	Then, we can reformulate (\ref{Eq_proofmain_spectral}) as follows
	\begin{align}
		\Vert X\Vert=\sup_{t\in T}\vert\xi(X)^{\top}t\vert =\sup_{t\in T}\big\vert \sum t_{i}\xi(X)_{i}\big\vert.\nonumber
	\end{align}
	Note that, $\Vert\xi_{ij}\Vert_{p}\asymp p^{1/\alpha}$ for $p\ge 1$. Hence, $\Vert\xi_{ij}\Vert_{2p}\lesssim 2^{1/\alpha}\Vert \xi_{ij}\Vert_{p}$ for $p\ge 2$. It follows from Lemma \ref{Lem_comparison} for $p\ge 1$
	\begin{align}\label{Eq_bound_1}
		\Big\Vert\sup_{t\in T}\big\vert \sum t_{i}\xi(X)_{i}\big\vert \Big\Vert_{p}\lesssim_{\alpha}\textsf{E}\sup_{t\in T}\big\vert \sum t_{i}\xi(X)_{i}\big\vert+\sup_{t\in T}\big\Vert \sum t_{i}\xi(X)_{i}\big\Vert_{p}.
	\end{align}
	As for the case $\alpha\ge 1$, Lemma \ref{Lem_chaos1_weibull} yields for $p\ge 1$
	\begin{align}
		\sup_{t\in T}\big\Vert \sum t_{i}\xi(X)_{i}\big\Vert_{p}\lesssim_{\alpha} \sup_{t\in T}\Big(p^{1/2}\Vert t\Vert_{2}+ p^{1/\alpha}\Vert t\Vert_{\alpha^{*}}   \Big)\lesssim p^{1/\alpha}\max_{ij}b_{ij}.\nonumber
	\end{align}
	Next, we turn to the case $0<\alpha\le 1$. Lemma \ref{Lem_comparison_2} yields for $p\ge 2$
	\begin{align}
		\sup_{t\in T}\big\Vert \sum t_{i}\xi(X)_{i}\big\Vert_{p}\asymp 	\sup_{t\in T}\Big(p^{1/2}\Vert t\Vert_{2}+p^{1/\alpha}\Vert t\Vert_{p}\Big)\lesssim p^{1/\alpha}\max_{ij}b_{ij}.\nonumber
	\end{align}
	Hence, we have for $\alpha>0$ and $p\ge 2$
	\begin{align}
		\sup_{t\in T}\big\Vert \sum t_{i}\xi(X)_{i}\big\Vert_{p}\lesssim p^{1/\alpha}\max_{ij}b_{ij}.\nonumber
	\end{align}
	Note that, we can chosse a proper $t_{0}\in T$ such that $\vert \xi(X)^{\top}t_{0}\vert=\vert(\max_{ij}b_{ij})\xi(X)_{i_{0}}\vert$.
	Hence, we have for $p\ge 1$
	\begin{align}
		\sup_{t\in T}\big\Vert \sum t_{i}\xi(X)_{i}\big\Vert_{p}\ge \Vert\xi(X)^{\top}t_{0} \Vert_{p}\gtrsim p^{1/\alpha}\max_{ij}b_{ij}.\nonumber
	\end{align}
	Combining the following obvious bound
	\begin{align}
		\Big\Vert\sup_{t\in T}\big\vert \sum t_{i}\xi(X)_{i}\big\vert \Big\Vert_{p}\gtrsim_{\alpha}\max\Big( \textsf{E}\sup_{t\in T}\big\vert \sum t_{i}\xi(X)_{i}\big\vert, \sup_{t\in T}\big\Vert \sum t_{i}\xi(X)_{i}\big\Vert_{p}\Big)\nonumber
	\end{align}
	with \eqref{Eq_bound_1}, we have for $p\ge 2$
	\begin{align}
		\Big\Vert\sup_{t\in T}\big\vert \sum t_{i}\xi(X)_{i}\big\vert \Big\Vert_{p}\asymp_{\alpha}  \textsf{E}\Vert X\Vert+p^{1/\alpha}\max_{ij}b_{ij} ,\nonumber
	\end{align}
	
	When $\xi_{ij}$ are symmetric $\alpha$-exponential variables with parameters $\theta_{1}, \theta_{2}$, we have by Lemma \ref{Lem_moments1} for any $1\le j\le i\le n$ and $t\ge 0$
	\begin{align}
		c_{1}e^{-t^{\alpha}/c_{1}}\le \textsf{P}\{\vert\xi_{ij}\vert>t \}\le c_{2}e^{-t^{\alpha}/c_{2}},\nonumber
	\end{align}
	where $c_{1}, c_{2}$ are constants depending on $\theta_{1}, \theta_{2}$ and $\alpha$. Let $\{\eta_{ij}: 1\le j\le i\le n\}$ be a sequence of independent variables and assume further $\xi_{ij}\sim\mathcal{W}_{s}(\alpha)$. Then, we have for $t\ge 0$
	\begin{align}
		\textsf{P}\{\vert \eta_{ij}\vert>t \}\le K_{1}\textsf{P}\{ K_{1}\vert\xi_{ij}\vert>t \}, \quad \textsf{P}\{\vert\xi_{ij}\vert>t  \}\le K_{2}\textsf{P}\{K_{2}\vert \eta_{ij}\vert>t \},\,\,\,\, 1\le j\le i\le n\nonumber,
	\end{align}
	where $K_{1}=\max(1/c_{1}, 1/c_{1}^{1/\alpha}, 1)$ and $K_{2}=\max(c_{2}, c_{2}^{1/\alpha}, 1)$. Corollary \ref{Cor_contraction_principle} yields in the case where $\xi_{ij}$ are symmetric $\alpha$-exponential variables
	\begin{align}
		\Big\Vert\sup_{t\in T}\big\vert \sum t_{i}\xi(X)_{i}\big\vert \Big\Vert_{p}\asymp_{\theta_{1}, \theta_{2}, \alpha}\textsf{E}\Vert X\Vert+p^{1/\alpha}\max_{ij}b_{ij}.\nonumber
	\end{align}
	
	If $\{\xi_{ij}, i\ge j  \}$ are asymmetric, note that
	\begin{align}
		\Big\Vert\sup_{t\in T}\big\vert\xi(X)^\top t\big\vert \Big\Vert_{p}\le \Big\Vert\sup_{t\in T}\big\vert\xi(X)^\top t-\tilde{\xi}(X)^\top t\big\vert \Big\Vert_{p}\le 2\Big\Vert\sup_{t\in T}\big\vert\xi(X)^\top t\big\vert \Big\Vert_{p},\nonumber
	\end{align}
	where $\tilde{\xi}(X)$ is an independent copy of $\xi(X)$. Observe that the entries of $\tilde{\xi}(X)-\xi(X)$ are symmetric and has the same moment bounds as $\xi_{ij}$. Thus, when $\xi_{ij}$ are general $\alpha$-exponential variables, we have for $p\ge 2$
	\begin{align}\label{Eq_bound_p_th}
		C_{1} \Big(\textsf{E}\Vert X\Vert+p^{1/\alpha}\max_{ij}b_{ij}\Big)\le \Big\Vert\sup_{t\in T}\big\vert \sum t_{i}\xi(X)_{i}\big\vert \Big\Vert_{p}\le C_{2} \Big(\textsf{E}\Vert X\Vert+p^{1/\alpha}\max_{ij}b_{ij}\Big) ,
	\end{align}
	where $C_{1}< C_{2}$ are constants depending only on $\alpha, \theta_{1}$ and $\theta_{2}$. 
	
	With \eqref{Eq_bound_p_th}, we have by Lemma \ref{Lem_Moments_2}
	\begin{align}
		\textsf{P}\{ \Vert X\Vert>eC_{2}(\text{E}\Vert X\Vert+t\max_{ij}b_{ij}) \}\le e^{2}e^{-t^{\alpha}}.\nonumber
	\end{align}
	Moerover, by the Paley-Zygmund inequality, we have for $p\ge 2$
	\begin{align}
		&\textsf{P}\Big\{  \sup_{t\in T}\big\vert  \xi(X)^\top t \big\vert \ge \frac{C_{1}}{2} \Big(  \textsf{E}\Vert X\Vert+p^{1/\alpha}\max_{ij}b_{ij} \Big)   \Big\}\nonumber\\
		\ge &\textsf{P}\Big\{ \sup_{t\in T}\big\vert  \xi(X)^\top t \big\vert\ge \frac{1}{2}  \big\Vert \sup_{t\in T}\big\vert  \xi(X)^\top t \big\vert \big\Vert_{p} \Big\}\nonumber\\
		\ge & (1-2^{-p})^{2}\Big(\frac{\big\Vert \sup_{t\in T}\big\vert  \xi(X)^\top t \big\vert \big\Vert_{p}}{\big\Vert \sup_{t\in T}\big\vert  \xi(X)^\top t \big\vert \big\Vert_{2p}}   \Big)^{2p}\ge \frac{1}{2} e^{-c_{3}p},\nonumber
	\end{align}
	where $c_{3}=2\log \big(C_{2}2^{1/\alpha}/C_{1}  \big)$. If we lower bound $e^{-c_{3}p}$ by $e^{-c_{3}(p+2)}$, the inequality is valid for all $p>0$. Hence, we obtain by letting $p^{1/\alpha}=t$
	\begin{align}
		\textsf{P}\Big\{ \Vert X\Vert\ge \frac{C_{1}}{2}\Big(\textsf{E}\Vert X\Vert +t\max_{ij}b_{ij}\Big) \Big\}\ge \frac{1}{2}e^{-2c_{3}}e^{-c_{3}t^{\alpha}}.\nonumber
	\end{align} 
\end{proof}
\section{Application: The smallest singular value}
In this section, we study the smallest smallest singular value of a rectangular random matrix with independent entries drawn from $\alpha$-exponential  distribution. Consider an $N\times n$ matrix $A$  with $N\ge n$. The smallest singular value of $A$ is defined as follows:
\begin{align}
	s_{n}(A)=\inf_{x: \Vert x\Vert_{2}=1}\Vert Ax\Vert_{2}.\nonumber
\end{align}
Our main result in this section is a deviation inequality for the smallest singular value.
\begin{mytheo}\label{Theo_smallest_singular}
	Let $A$ be an $N\times n$ matrix whose entries are independent $\alpha$-exponential ($\alpha>0$) variables with parameters $\theta_{1}, \theta_{2}$. There exists $c_{1}, c_{2}>0$ and $\delta_{0}\in (0, 1)$ that depend on $\alpha, \theta_{1}, \theta_{2}$ such that if $n<\delta_{0}N$ then
	\begin{align}
		\textsf{P}\big\{s_{n}(A)\le c_{1}\sqrt{N}  \big\}\le e^{-c_{2}N^{\alpha/2}}.
	\end{align}
\end{mytheo}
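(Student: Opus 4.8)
The plan is to follow the classical Bernstein-type strategy for the smallest singular value: bound $s_n(A)$ below on a fixed direction, then take a net over the sphere and control the failure probability by a union bound, using the deviation inequality for the spectral norm from Theorem \ref{Theo_deviation} to absorb the net cardinality.

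First I would note the elementary identity $s_n(A)=\inf_{x\in S^n_2}\Vert Ax\Vert_2$ and recall that for a single fixed unit vector $x$, the random vector $Ax$ has independent coordinates $(Ax)_k=\sum_j a_{kj}x_j$, each of which is a canonical combination of independent $\alpha$-exponential variables. Using Lemma \ref{Lem_chaos1_weibull} (or Lemma \ref{Lem_comparison_2}, depending on whether $\alpha\ge 1$ or $0<\alpha\le 1$), together with Lemma \ref{Lem_moments1}, I would show $\Vert (Ax)_k\Vert_p\gtrsim_{\theta_1,\theta_2,\alpha}\sqrt p\,\Vert x\Vert_2=\sqrt p$ and, via a Paley–Zygmund argument, that $\textsf{P}\{\vert (Ax)_k\vert\ge c\}\ge p_0$ for an absolute constant $c>0$. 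Since the $N$ coordinates are independent, a Chernoff/binomial estimate gives $\textsf{P}\{\Vert Ax\Vert_2\le c_1\sqrt N\}\le e^{-c N}$ for a fixed $x$, once $c_1$ is small relative to $c$. (Here I expect $N^{\alpha/2}\le N$ for $\alpha\le 2$, but for $\alpha>2$ the exponent in the theorem is $N^{\alpha/2}\ge N$, so this single-vector bound alone is not yet strong enough — see the obstacle paragraph.)

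Next I would introduce a $\tfrac14$-net $\mathcal N$ of $S^n_2$ with $\vert\mathcal N\vert\le 9^n$ and write, for arbitrary $x\in S^n_2$ with nearest net point $y$, $\Vert Ax\Vert_2\ge \Vert Ay\Vert_2-\Vert A\Vert\,\Vert x-y\Vert_2\ge \Vert Ay\Vert_2-\tfrac14\Vert A\Vert$. On the event $\{\Vert A\Vert\le M\}$ with $M:=C_1\textsf{E}\Vert A\Vert+t\max_{ij}b_{ij}$ chosen via the first inequality of Theorem \ref{Theo_deviation}, and on the intersection of the events $\{\Vert Ay\Vert_2\ge c_1\sqrt N\ \forall y\in\mathcal N\}$, one gets $s_n(A)\ge c_1\sqrt N-\tfrac14 M$. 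By a union bound the net event fails with probability at most $9^n e^{-cN}$, which is $\le e^{-cN/2}$ provided $n<\delta_0 N$ with $\delta_0$ small enough (absorbing $\log 9$). It remains to check that $M\lesssim \sqrt N$ with the right probability: since $\textsf{E}\Vert A\Vert\asymp_{\theta_1,\theta_2,\alpha}\sqrt N$ in the i.i.d.\ case by \eqref{Eq_optimal_bound} (with all $b_{ij}=1$, $\max_i\sqrt{\sum_j b_{ij}^2}=\sqrt N$ dominating the logarithmic term), and $\max_{ij}b_{ij}=1$, taking $t\asymp\sqrt N$ makes $M\le c_1\sqrt N/2$ fail only with probability $C_2\exp(-c_1 t^\alpha)=C_2\exp(-c N^{\alpha/2})$. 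Combining the two bad events gives the claimed $e^{-c_2 N^{\alpha/2}}$ after adjusting constants and shrinking $c_1$.

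The main obstacle is reconciling the two exponents. The net/union-bound step naturally produces $e^{-cN}$ (dimension-type decay), while the spectral-norm deviation step produces $e^{-cN^{\alpha/2}}$; for $\alpha\le 2$ the latter is the weaker (hence binding) term and everything is consistent, but for $\alpha>2$ one has $N^{\alpha/2}>N$, so the single-vector small-ball estimate $e^{-cN}$ would be \emph{too weak} to yield $e^{-c_2 N^{\alpha/2}}$. I would handle this by sharpening the anti-concentration of $\Vert Ax\Vert_2$ for a fixed $x$: rather than a crude binomial count of coordinates exceeding a constant, estimate negative moments or use that $\Vert Ax\Vert_2^2=\sum_k (Ax)_k^2$ is a sum of $N$ i.i.d.\ nonnegative variables with $\alpha$-exponential-squared tails, so its lower deviations below $c_1 N$ decay like $\exp(-cN^{?})$; careful tracking shows the relevant rate for the event $\{\Vert Ax\Vert_2\le c_1\sqrt N\}$ is governed by the left tail of a single squared coordinate near zero (which is harmless) combined with the number of coordinates, and in fact the correct dimension-free rate here is again $\exp(-cN)$ for $\alpha\le 2$ but must be re-examined for $\alpha>2$ — alternatively, one restricts to $0<\alpha\le 2$ in the spirit of \eqref{Eq_optimal_bound} and Proposition 1.16 of \cite{Adamczak}, where the statement is cleanest. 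In the write-up I would make the parameter bookkeeping explicit so that the final $c_1$ is chosen \emph{after} $\delta_0$, ensuring $c_1\sqrt N-\tfrac14 M>0$ on the good event.
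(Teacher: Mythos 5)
Your proposal follows essentially the same route as the paper: a fixed-vector small-ball estimate (the paper invokes the Paley--Zygmund-type Lemma \ref{Lem_small_ball} together with the tensorization Lemma \ref{Lem_tensorization}, which is exactly your Chernoff-type count of good coordinates), a union bound over an $\varepsilon$-net of $S^{n-1}$ absorbed by taking $n<\delta_{0}N$, and Theorem \ref{Theo_deviation} with $t\asymp\sqrt{N}$ and $\textsf{E}\Vert A\Vert\lesssim\sqrt{N}$ to control $\Vert A\Vert$. The obstacle you flag for $\alpha>2$ is genuine and is shared by the paper's own argument: the net step only produces $e^{-c_{4}N}$, so the combined bound $e^{-c_{4}N}+e^{-c_{5}N^{\alpha/2}}$ yields the stated rate $e^{-c_{2}N^{\alpha/2}}$ only when $\alpha\le 2$, a point the paper passes over silently.
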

Before proving this result, we first introduce a general estimate on the small ball probability and a tensorization lemma transferring one-dimensional small ball probability estimates to the multidimensional case.
\begin{mylem}[Lemma 2.6 in \cite{Rudelson_advance}]\label{Lem_small_ball}
	Let $\xi_{1}, \cdots,\xi_{n}$ be independent centered random variables with variances at least $1$ and forth moments bounded by $B$. Then there exists $\mu\in (0,1)$ depending only on $B$, such that for every coefficient vector $a=(a_{1},\cdots, a_{n})\in S^{n-1}$ the random sum $S=\sum_{k=1}^{n}a_{k}\xi_{k}$ satisfies
	\begin{align}
		\textsf{P}\{\vert S\vert<\frac{1}{2}  \}\le \mu.\nonumber
	\end{align}
\end{mylem}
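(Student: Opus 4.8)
The plan is to prove this small-ball bound by a second-moment (Paley--Zygmund type) argument: one lower-bounds $\textsf{E}S^{2}$ and upper-bounds $\textsf{E}S^{4}$ uniformly over $a\in S^{n-1}$, and then converts these two estimates into a lower bound for $\textsf{P}\{\vert S\vert\ge 1/2\}$, which is exactly the complement of the claimed event.

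\textbf{Step 1: moment estimates.} Since the $\xi_{k}$ are independent and centered and $\sum_{k}a_{k}^{2}=1$, the sum $S=\sum_{k}a_{k}\xi_{k}$ is centered with
\begin{align}
	\textsf{E}S^{2}=\sum_{k=1}^{n}a_{k}^{2}\,\textsf{E}\xi_{k}^{2}\ge \sum_{k=1}^{n}a_{k}^{2}=1,\nonumber
\end{align}
because each variance $\textsf{E}\xi_{k}^{2}$ is at least $1$. Expanding $S^{4}=\sum_{i,j,k,l}a_{i}a_{j}a_{k}a_{l}\xi_{i}\xi_{j}\xi_{k}\xi_{l}$ and discarding every term in which some index has multiplicity one (these vanish by independence and centering), only the diagonal terms and the ``two equal pairs'' terms survive, so
\begin{align}
	\textsf{E}S^{4}=\sum_{k=1}^{n}a_{k}^{4}\,\textsf{E}\xi_{k}^{4}+3\sum_{k\ne l}a_{k}^{2}a_{l}^{2}\,\textsf{E}\xi_{k}^{2}\,\textsf{E}\xi_{l}^{2}.\nonumber
\end{align}
Using $\textsf{E}\xi_{k}^{4}\le B$ and $\textsf{E}\xi_{k}^{2}\le(\textsf{E}\xi_{k}^{4})^{1/2}\le B^{1/2}$, together with $\sum_{k}a_{k}^{4}\le(\sum_{k}a_{k}^{2})^{2}=1$ and $\sum_{k\ne l}a_{k}^{2}a_{l}^{2}\le(\sum_{k}a_{k}^{2})^{2}=1$, this gives the uniform bound $\textsf{E}S^{4}\le 4B$.

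\textbf{Step 2: Paley--Zygmund.} Splitting according to whether $\vert S\vert<1/2$ and applying Cauchy--Schwarz,
\begin{align}
	1\le \textsf{E}S^{2}=\textsf{E}\big[S^{2}\mathbb{I}_{\{\vert S\vert<1/2\}}\big]+\textsf{E}\big[S^{2}\mathbb{I}_{\{\vert S\vert\ge 1/2\}}\big]\le \frac{1}{4}+\big(\textsf{E}S^{4}\big)^{1/2}\big(\textsf{P}\{\vert S\vert\ge 1/2\}\big)^{1/2}.\nonumber
\end{align}
Hence $(\textsf{E}S^{4})^{1/2}(\textsf{P}\{\vert S\vert\ge 1/2\})^{1/2}\ge 3/4$, and since $\textsf{E}S^{4}\le 4B$ we obtain $\textsf{P}\{\vert S\vert\ge 1/2\}\ge 9/(64B)$. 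Taking $\mu:=1-\tfrac{9}{64B}$ then yields $\textsf{P}\{\vert S\vert<1/2\}\le\mu$. Note $B\ge 1$ (since $\textsf{E}\xi_{k}^{4}\ge(\textsf{E}\xi_{k}^{2})^{2}\ge 1$), so $\mu\in(0,1)$ and depends only on $B$, as claimed.

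There is no genuine obstacle here; the only point demanding a little care is the fourth-moment expansion, in particular verifying that the coefficient of the ordered off-diagonal sum $\sum_{k\ne l}$ is exactly $3$ (equivalently $6$ over unordered pairs), after which everything reduces to Cauchy--Schwarz and the elementary bounds $\sum_{k}a_{k}^{4}\le 1$ and $\sum_{k\ne l}a_{k}^{2}a_{l}^{2}\le 1$ for $\ell_{2}$-normalized coefficients. As an alternative to Step 2 one could invoke the standard Paley--Zygmund inequality $\textsf{P}\{\vert S\vert>\theta(\textsf{E}S^{2})^{1/2}\}\ge(1-\theta^{2})^{2}(\textsf{E}S^{2})^{2}/\textsf{E}S^{4}$ with $\theta=1/2$, noting that $\tfrac12(\textsf{E}S^{2})^{1/2}\ge\tfrac12$ makes $\{\vert S\vert>\tfrac12(\textsf{E}S^{2})^{1/2}\}$ a subset of $\{\vert S\vert\ge\tfrac12\}$; this produces the same constant $9/(64B)$.
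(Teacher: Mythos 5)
Your argument is correct: the second-moment lower bound $\textsf{E}S^{2}\ge 1$, the fourth-moment expansion with coefficient $3$ on the ordered off-diagonal sum giving $\textsf{E}S^{4}\le 4B$, and the Cauchy--Schwarz/Paley--Zygmund step all check out, yielding $\textsf{P}\{\vert S\vert\ge 1/2\}\ge 9/(64B)$ with $B\ge 1$ guaranteeing $\mu\in(0,1)$. The paper does not prove this lemma but quotes it from Rudelson and Vershynin, whose proof is essentially this same Paley--Zygmund fourth-moment argument, so your route matches the standard one.
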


\begin{mylem}[Lemma 2.2 in \cite{Rudelson_advance}]\label{Lem_tensorization}
	Let $\zeta_{1}, \cdots, \zeta_{n}$ be independent non-negative random variables. Assume that there exist $\lambda>0$ and $\mu\in (0, 1)$ such that for each $k$
	\begin{align}
		\textsf{P}\{\zeta_{k}<\lambda   \}\le \mu
	\end{align}
	Then, there exist $\lambda_{1}>0$ and $\mu_{1}\in (0, 1)$ that depend on $\lambda$ and $\mu$ only and such that
	\begin{align}
		\textsf{P}\Big\{ \sum_{k=1}^{n}\zeta_{k}^{2}<\lambda_{1}n \Big\}\le\mu_{1}^{n}.\nonumber\nonumber
	\end{align}
	
\end{mylem}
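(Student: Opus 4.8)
The plan is to apply the standard Laplace-transform (Chernoff) tensorization trick. Fix a parameter $s>0$, to be chosen at the very end. Since $0\le e^{-s\zeta_{k}^{2}}\le 1$ the transforms that appear are always finite, so no moment assumption on the $\zeta_k$ is needed, and by independence Markov's inequality gives, for every $\lambda_{1}>0$,
\begin{align}
	\textsf{P}\Big\{\sum_{k=1}^{n}\zeta_{k}^{2}<\lambda_{1}n\Big\}=\textsf{P}\Big\{e^{-s\sum_{k}\zeta_{k}^{2}}>e^{-s\lambda_{1}n}\Big\}\le e^{s\lambda_{1}n}\prod_{k=1}^{n}\textsf{E}\,e^{-s\zeta_{k}^{2}}.\nonumber
\end{align}
Thus the whole problem reduces to bounding the one-variable quantity $\textsf{E}\,e^{-s\zeta_{k}^{2}}$ strictly below $1$, uniformly in $k$.

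For that one-variable bound I would split the expectation according to whether $\zeta_{k}<\lambda$ or $\zeta_{k}\ge\lambda$: on the first event $e^{-s\zeta_{k}^{2}}\le 1$ and, by hypothesis, that event has probability at most $\mu$, while on the second event $e^{-s\zeta_{k}^{2}}\le e^{-s\lambda^{2}}$. Hence
\begin{align}
	\textsf{E}\,e^{-s\zeta_{k}^{2}}\le\textsf{P}\{\zeta_{k}<\lambda\}+e^{-s\lambda^{2}}\textsf{P}\{\zeta_{k}\ge\lambda\}\le\mu+e^{-s\lambda^{2}},\nonumber
\end{align}
so that combining with the previous display $\textsf{P}\{\sum_{k}\zeta_{k}^{2}<\lambda_{1}n\}\le\big(e^{s\lambda_{1}}(\mu+e^{-s\lambda^{2}})\big)^{n}$.

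It remains only to calibrate $s$ and $\lambda_{1}$. Since $\mu<1$, first take $s=\lambda^{-2}\log\frac{2}{1-\mu}$, so that $e^{-s\lambda^{2}}=\frac{1-\mu}{2}$ and therefore $\mu+e^{-s\lambda^{2}}=\frac{1+\mu}{2}<1$; then take $\lambda_{1}=\frac{1}{2s}\log\frac{2}{1+\mu}>0$, so that $e^{s\lambda_{1}}=\sqrt{2/(1+\mu)}$ and the base of the exponent becomes
\begin{align}
	\mu_{1}:=e^{s\lambda_{1}}\big(\mu+e^{-s\lambda^{2}}\big)=\sqrt{\tfrac{2}{1+\mu}}\cdot\tfrac{1+\mu}{2}=\sqrt{\tfrac{1+\mu}{2}}\in(0,1).\nonumber
\end{align}
Both $s$ and $\lambda_{1}$ depend only on $\lambda$ and $\mu$, which is exactly what is claimed, and the displayed bound reads $\textsf{P}\{\sum_{k}\zeta_{k}^{2}<\lambda_{1}n\}\le\mu_{1}^{n}$. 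There is no genuine obstacle in this argument — it is the classical tensorization lemma — and the only point deserving care is the ordering of the two choices: first shrink $e^{-s\lambda^{2}}$ using $\mu<1$, then pick $\lambda_{1}$ small enough that the compensating factor $e^{s\lambda_{1}}$ cannot push the product back up to $1$. That ordering is precisely what secures $\mu_{1}<1$ with the two constants depending on $\lambda,\mu$ alone.
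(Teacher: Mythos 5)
Your proof is correct: the Markov/Laplace-transform step, the splitting of $\textsf{E}\,e^{-s\zeta_{k}^{2}}$ on $\{\zeta_{k}<\lambda\}$ versus $\{\zeta_{k}\ge\lambda\}$, and the calibration of $s$ and $\lambda_{1}$ all check out, yielding $\mu_{1}=\sqrt{(1+\mu)/2}<1$ with constants depending only on $\lambda,\mu$. The paper does not prove this lemma itself but simply cites Lemma 2.2 of Rudelson--Vershynin, and your Chernoff-type tensorization is essentially the same argument as in that source, so there is nothing further to reconcile.
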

\begin{proof}[Proof of Theorem \ref{Theo_smallest_singular}]
	We first make the following decomposition:
	\begin{align}
		\textsf{P}\big\{s_{n}(A)\le c_{1}\sqrt{N}  \big\}\le \textsf{P}\big\{s_{n}(A)\le c_{1}\sqrt{N}, \Vert A\Vert \le c_{3}\sqrt{N} \big\}+\textsf{P}\{\Vert A\Vert>c_{3}\sqrt{N}  \},
	\end{align}
	where $c_{3}$ is a constant depending on $\alpha, \theta_{1}, \theta_{2}$ and will be determined below.
	
	Combining Lemma \ref{Lem_small_ball} with Lemma \ref{Lem_tensorization} yields that there exist constants $\mu_{1}, \mu_{2}\in (0, 1)$ depending only on $\theta_{1}, \theta_{2}, \alpha$ such that for every $x\in S^{n-1}$
	\begin{align}
		\textsf{P}\{\Vert Ax\Vert_{2}<\mu_{1}\sqrt{N} \}\le \mu_{2}^{N}.\nonumber
	\end{align}
	Let $\varepsilon>0$ to be chosen later. There exists an $\varepsilon$-net $\mathcal{N}$ in $S^{n-1}$ (with respect to the Euclidean norm) such that $\vert \mathcal{N}\vert\le (3/\varepsilon)^{n}$ (see e.g. Corollary 4.2.13 in \cite{Vershynin_book_highprobability}). We have by the union bound
	\begin{align}\label{Eq_smallest_proof}
		\textsf{P}\Big\{\exists x\in\mathcal{N}: \Vert Ax\Vert_{2}<\mu_{1}\sqrt{N}  \Big\}\le (3/\varepsilon)^{n}\mu_{2}^{N}.
	\end{align}
	Let $\Omega:=\Big\{\Vert A\Vert\le c_{3}\sqrt{N},\quad \Vert Ay\Vert_{2}\le \frac{1}{2}\mu_{1}\sqrt{N}, \exists y\in S^{n-1} \Big\}$. Assume that $\Omega$ occurs, and choose a point $x\in \mathcal{N}$ such that $\Vert y-x\Vert_{2}<\varepsilon$. Then, we have by setting $\varepsilon=\mu_{1}/2c_{3}$
	\begin{align}
		\Vert Ax\Vert_{2}\le \Vert Ay\Vert_{2}+\Vert A\Vert\cdot\Vert x-y\Vert_{2}\le \frac{1}{2}\mu_{1}\sqrt{N}+c_{3}\sqrt{N}\cdot \varepsilon=\mu_{1}\sqrt{N}.\nonumber
	\end{align}
	Hence, \eqref{Eq_smallest_proof} yields that
	\begin{align}
		\textsf{P}\{\Omega  \}\le (\mu_{2}(3/\varepsilon)^{n/N})^{N} \le e^{-c_{4}N}.
	\end{align}
	Here we assume that $n/N\le \delta_{0}$ for small enough $\delta_{0}$.
	
	Next we turn to bound the $\textsf{P}\{\Vert A\Vert>c_{3}\sqrt{N}  \}$.
	For an $N\times n$ asymmetric matrix $A$, set
	$$
	\hat{A}=\begin{bmatrix}
		0  &  A  \\
		A^{*}  &  0
	\end{bmatrix},$$
	where $A^{*}$ is the transpose of $A$.
	Then $\hat{A}$ is   symmetric  and  $\Vert \hat{A}\Vert=\Vert A\Vert$. In the case $0<\alpha\le 2$, we have by \eqref{Eq_optimal_bound} 
	\begin{align}\label{Eq_application1}
		\textsf{E}\Vert A\Vert \lesssim_{\alpha, \theta_{1}, \theta_{2}} \sqrt{N}.
	\end{align}
	Note that Lemma \ref{Lem_contraction_principle} yields \eqref{Eq_application1} is valid in the case $\alpha >2$.
	Then, Theorem \ref{Theo_deviation} yields that
	\begin{align}
		\textsf{P}\{\Vert A\Vert>c_{3}\sqrt{N}  \}\le e^{-c_{5}N^{\alpha/2}}.\nonumber
	\end{align}
	Hence, we conclude the proof.
\end{proof}

\section{Discussion}
In this subsection we provide an alternative interesting proof for Theorem \ref{Theo_deviation} in the case $\alpha\ge 1$. This method mainly relies on the following measure concentration phenomenon.
\begin{mylem}[Theorem 4.19 in \cite{Ledoux_book}]\label{Lem_concentration_measure}
	Let $\nu_{\alpha}$ be the measure with density $c(\alpha)e^{-\vert x\vert^{\alpha}}$, $1\le \alpha<\infty$, with respect to Lebesgue measure on $\mathbb{R}$. Denote by $\nu_{\alpha}^{n}$ be the product measure of $\nu_{\alpha}$ on $\mathbb{R}^{n}$. There is a constant $C(\alpha)>0$ only depending on $1\le \alpha<\infty$ such that for any Borel set $A$ in $\mathbb{R}^{n}$ and any $r>0$,
	\begin{align}
		1-\nu_{\alpha}^{n}\{A+\sqrt{r}B_{2}^{n}+r^{1/\alpha}B_{\alpha}^{n}  \}\le \frac{1}{\nu_{\alpha}^{n}\{A \}}e^{-r/C(\alpha)}.\nonumber
	\end{align}
\end{mylem}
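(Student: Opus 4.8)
The plan is to prove Lemma~\ref{Lem_concentration_measure} by Maurey's infimum-convolution (property $(\tau)$) method, which reduces the $n$-dimensional statement to a one-dimensional functional inequality for $\nu_{\alpha}$ and then tensorizes. The restriction $1\le\alpha<\infty$ enters precisely because it makes $x\mapsto\vert x\vert^{\alpha}$ convex, so that $\nu_{\alpha}$ is log-concave; its potential $\vert x\vert^{\alpha}$ grows like $x^{2}$ near the origin and like $\vert x\vert^{\alpha}$ at infinity, and this two-regime behaviour is exactly what produces the two-scale enlargement $\sqrt{r}B_{2}^{n}+r^{1/\alpha}B_{\alpha}^{n}$.

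\emph{Step 1 (the one-dimensional inequality).} Fix a constant $K(\alpha)>0$, set $w_{\alpha}(t):=K(\alpha)^{-1}\min\{t^{2},\vert t\vert^{\alpha}\}$, and define $Qf(x):=\inf_{y\in\mathbb{R}}\big(f(y)+w_{\alpha}(x-y)\big)$. The analytic core of the proof is to choose $K(\alpha)$ large enough that
\[
\int_{\mathbb{R}}e^{Qf}\,d\nu_{\alpha}\cdot\int_{\mathbb{R}}e^{-f}\,d\nu_{\alpha}\le 1\qquad\text{for every bounded measurable }f:\mathbb{R}\to\mathbb{R}.
\]
For $\alpha=2$ this is the Gaussian inf-convolution inequality and for $\alpha=1$ it is the exponential one of Talagrand and Maurey; for general $1\le\alpha<\infty$ one obtains it either by a direct one-dimensional analysis of the log-concave density $e^{-\vert x\vert^{\alpha}}$ with the stated growth, or by exponentiating a (modified) logarithmic Sobolev inequality for $\nu_{\alpha}$ along the Hamilton--Jacobi semigroup $(Q_{t}f)_{t\ge 0}$ associated with $w_{\alpha}$, the route followed in \cite{Ledoux_book}.

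\emph{Step 2 (tensorization) and Step 3 (passage to the enlargement).} By Maurey's tensorization principle, if every factor of $\nu_{\alpha}^{n}$ satisfies the inequality of Step~1 with cost $w_{\alpha}$, then $\nu_{\alpha}^{n}$ satisfies it with the additive cost $W(z):=\sum_{i=1}^{n}w_{\alpha}(z_{i})$, i.e. $\int e^{Q_{W}g}\,d\nu_{\alpha}^{n}\cdot\int e^{-g}\,d\nu_{\alpha}^{n}\le 1$ for all bounded $g$, where $Q_{W}g(x)=\inf_{y}\big(g(y)+W(x-y)\big)$. Applying this to a bounded approximation of $g=+\infty\cdot\mathbb{I}_{A^{c}}$ gives $\int e^{-g}\,d\nu_{\alpha}^{n}=\nu_{\alpha}^{n}(A)$ and $Q_{W}g(x)=d_{W}(x,A):=\inf_{y\in A}W(x-y)$, so Markov's inequality yields $\nu_{\alpha}^{n}\{x:d_{W}(x,A)\ge s\}\le\nu_{\alpha}^{n}(A)^{-1}e^{-s}$ for all $s>0$. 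It then suffices to verify the elementary geometric inclusion
\[
\big\{z\in\mathbb{R}^{n}:W(z)<r/K(\alpha)\big\}\subseteq\sqrt{r}B_{2}^{n}+r^{1/\alpha}B_{\alpha}^{n},
\]
which is checked coordinatewise: for each $i$ set $(u_{i},v_{i})=(z_{i},0)$ if $\min\{z_{i}^{2},\vert z_{i}\vert^{\alpha}\}=z_{i}^{2}$ and $(u_{i},v_{i})=(0,z_{i})$ otherwise, so that $u_{i}^{2}\le\min\{z_{i}^{2},\vert z_{i}\vert^{\alpha}\}$ and $\vert v_{i}\vert^{\alpha}\le\min\{z_{i}^{2},\vert z_{i}\vert^{\alpha}\}$; summing over $i$ gives $\Vert u\Vert_{2}^{2}\le K(\alpha)W(z)<r$ and $\Vert v\Vert_{\alpha}^{\alpha}\le K(\alpha)W(z)<r$, hence $z=u+v\in\sqrt{r}B_{2}^{n}+r^{1/\alpha}B_{\alpha}^{n}$. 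Consequently $\{x:d_{W}(x,A)<r/K(\alpha)\}\subseteq A+\sqrt{r}B_{2}^{n}+r^{1/\alpha}B_{\alpha}^{n}$; taking $s=r/K(\alpha)$ in the Markov bound and setting $C(\alpha)=K(\alpha)$ gives precisely $1-\nu_{\alpha}^{n}\{A+\sqrt{r}B_{2}^{n}+r^{1/\alpha}B_{\alpha}^{n}\}\le\nu_{\alpha}^{n}(A)^{-1}e^{-r/C(\alpha)}$.

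The main obstacle is Step~1: securing the one-dimensional property $(\tau)$ inequality with the two-scale cost $\min\{t^{2},\vert t\vert^{\alpha}\}$ and a constant $K(\alpha)$ that does not degenerate as $\alpha$ varies over $[1,\infty)$. Step~2 is a soft tensorization argument (the $(\tau)$ inequality multiplies under products), and Step~3 is a routine inclusion of convex bodies together with an elementary truncation argument for the unbounded test function $g$.
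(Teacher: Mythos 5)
This lemma is not proved in the paper at all: it is imported verbatim as Theorem 4.19 of Ledoux's monograph \cite{Ledoux_book}, so there is no internal argument to compare yours against; what you have written is, in outline, the standard proof of that cited result. Your architecture is correct: Maurey's property $(\tau)$ tensorizes by a plain Fubini-type argument (no convexity of the cost is needed), the truncation $g_{M}=M\,\mathbb{I}_{A^{c}}$ with $M\to\infty$ legitimately yields $\nu_{\alpha}^{n}\{d_{W}(\cdot,A)\ge s\}\le \nu_{\alpha}^{n}(A)^{-1}e^{-s}$, and your coordinatewise split $z=u+v$ correctly gives $\{W<r/K(\alpha)\}\subset\sqrt{r}B_{2}^{n}+r^{1/\alpha}B_{\alpha}^{n}$, hence the claimed bound with $C(\alpha)=K(\alpha)$. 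The only substantive ingredient you do not actually prove is Step 1, which you rightly flag as the core; it is a known one-dimensional fact, but two caveats are worth recording. First, for $1\le\alpha\le 2$ the Hamilton--Jacobi/modified log-Sobolev route most directly yields the dual transport form $\int e^{Qf}\,d\nu_{\alpha}\le e^{\int f\,d\nu_{\alpha}}$, which is weaker than the product form of $(\tau)$; to recover the $\nu_{\alpha}^{n}(A)^{-1}$ factor one should either pass through the symmetrized transport inequality (conditioning on $A$ and on the complement of the enlargement) or invoke the product-form inequality itself (Talagrand and Maurey for $\alpha=1$, Lata{\l}a--Wojtaszczyk for general $\alpha\in[1,2]$). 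Second, for $\alpha\ge 2$ no two-scale analysis is needed: $\nu_{\alpha}$ is log-concave with sub-Gaussian tails, hence satisfies a log-Sobolev inequality, which gives dimension-free Gaussian-type concentration (equivalently $(\tau)$ with quadratic cost), and since $\min\{t^{2},\vert t\vert^{\alpha}\}\le t^{2}$ your cost is dominated a fortiori --- in this range the summand $r^{1/\alpha}B_{\alpha}^{n}$ only weakens the statement, so your heuristic that the two regimes of the potential produce the two-scale enlargement is only accurate for $1\le\alpha\le 2$. Finally, the uniformity in $\alpha$ you worry about is not required, since the lemma allows $C(\alpha)$ to depend on $\alpha$.
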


\begin{proof}[Proof of Theorem \ref{Theo_deviation} in the case $\alpha\ge 1$]
	Let $\{\eta_{ij}: i\ge j\}$ be a sequence of i.i.d. random variables with density $c(\alpha)e^{-\vert x\vert^{\alpha}}$, $\alpha\ge 1$. A direct integration yields for $p\ge 1$
	\begin{align}
		\Vert \eta_{ij}\Vert_{p}=\Big(2c(\alpha)\int_{0}^{\infty}x^{p} e^{-x^{\alpha}}\, dx   \Big)^{1/p}=\Big(\frac{2c(\alpha)}{\alpha}\Gamma(\frac{p+1}{\alpha})   \Big)^{1/p}\asymp_{\alpha} p^{1/\alpha}.\nonumber
	\end{align}
	Recalling the representation \eqref{Eq_proofmain_spectral}, we have by Lemma \ref{Lem_moments1} and Corollary \ref{Cor_contraction_principle}
	\begin{align}\label{Eq_discussion_1}
		\big(\textsf{E}\Vert X\Vert^{p}\big)^{1/p}\asymp_{\theta_{1}, \theta_{2}, \alpha}\big(\textsf{E}\Vert Y\Vert^{p}\big)^{1/p}.
	\end{align}
	Here, $Y=(b_{ij}\eta_{ij})$ is an $n\times n$ symmetric matrix.
	
	Define the function $F$ as follows:
	\begin{align}
		F(u):=\sup_{x\in B^{n}_{2}}\Big\vert \sum_{i=1}^{n}x_{i}^{2}b_{ii}u_{ii}+2\sum_{1\le j<i\le n}x_{i}x_{j}b_{ij}u_{ij}\Big\vert,
	\end{align} 
	where $u=(u_{ij}: 1\le j\le i\le n)\in\mathbb{R}^{n(n+1)/2}$. Note that 
	\begin{align}
		\vert F(u)-F(v)\vert \le \sqrt{2}\max_{ij}b_{ij}\Vert u-v\Vert_{2}.\nonumber
	\end{align}
	Let $A=\{F\le m_{\Vert Y\Vert}  \}$, where $m_{\Vert Y\Vert}$ is the median of $\Vert Y\Vert$. Then, we have 
	\begin{align}
		A+\sqrt{r}B_{2}^{n(n+1)/2}+r^{1/\alpha}B_{\alpha}^{n(n+1)/2}\subset \big\{\ F\le m_{\Vert Y\Vert}+ \sqrt{2}\max_{ij}b_{ij}(\sqrt{r}+r^{1/\alpha})\big \}.\nonumber
	\end{align}
	Note that $F(\eta)=\Vert Y\Vert$, where $\eta=(\eta_{ij}: 1\le j\le i\le n)$. Then, we have by Lemma \ref{Lem_concentration_measure}
	\begin{align}
		\textsf{P}\{\Vert Y\Vert>m_{\Vert Y\Vert}+\sqrt{2}\max_{ij}b_{ij}(\sqrt{r}+r^{1/\alpha})  \}\le 2e^{-r/C(\alpha)}.\nonumber
	\end{align}
	Similarly, we have 
	\begin{align}
		\textsf{P}\{\Vert Y\Vert\le m_{\Vert Y\Vert}-\sqrt{2}\max_{ij}b_{ij}(\sqrt{r}+r^{1/\alpha})  \}\le 2e^{-r/C(\alpha)}.\nonumber
	\end{align}
	Integration yields for $p\ge 1$
	\begin{align}
		(\textsf{E}\Vert Y\Vert^{p})^{1/p}\lesssim_{\alpha} m_{\Vert Y\Vert}+\max_{ij}b_{ij}p^{1/\alpha}.\nonumber
	\end{align}
	Note that
	\begin{align}
		\big\vert\textsf{E}\Vert Y\Vert-m_{\Vert Y\Vert}\big\vert\le\textsf{E}\big\vert \Vert Y\Vert-m_{\Vert Y\Vert} \big\vert \le C_{1}(\alpha)\max_{ij}b_{ij}.\nonumber
	\end{align}
	Hence, we have $p\ge 1$
	\begin{align}
		(\textsf{E}\Vert Y\Vert^{p})^{1/p}\lesssim_{\alpha} \textsf{E}\Vert Y\Vert+\max_{ij}b_{ij}p^{1/\alpha}.\nonumber
	\end{align}
	One can obtain the lower bound following the same line as in the proof of Theorem $\ref{Theo_deviation}$. By virtue of \eqref{Eq_discussion_1}, we have 
	\begin{align}
		(\textsf{E}\Vert X\Vert^{p})^{1/p}\asymp_{\theta_{1}, \theta_{2}, \alpha} \textsf{E}\Vert X\Vert+\max_{ij}b_{ij}p^{1/\alpha}.\nonumber
	\end{align}
	The desired result follows from the Markov inequality and the Paley-Zygmund inequality.
	
\end{proof}

\textbf{Acknowledgment} Su was partly supported by the National Natural Science Foundation of China (12271475, U23A2064) and fundamental research funds for central university grants.

\end{document}